\newcounter{subeqn} \renewcommand{\thesubeqn}{\theequation\alph{subeqn}}%
\newcommand{\subeqn}{%
  \refstepcounter{subeqn}
  \tag{\thesubeqn}
	}
\theoremstyle{definition}
\newtheorem{definition}{Definition}
\newtheorem{theorem}{Theorem}
\newtheorem{lemma}{\it Lemma}
\theoremstyle{remark}
\newtheorem{remark}{Remark}
\newcommand{\btheta}{\boldsymbol{\theta}}
\newcommand{\bTheta}{\boldsymbol{\Theta}}
\newcommand{\bx}{\textbf{x}}
\newcommand{\bX}{\textbf{X}}
\newcommand{\f}{\textbf{f}}
\newcommand{\bu}{\textbf{u}}
\newcommand{\bU}{\textbf{U}}
\newcommand{\bD}{\textbf{D}}
\newcommand{\be}{\hat{\textbf{e}}}
\begin{document}


\title{Concurrent Learning Adaptive Model Predictive Control
			 with Pseudospectral Implementation}
			
\author{Olugbenga Moses Anubi}

\note{blank}
\maketitle

	%
	%
	%
	%
	%

\begin{abstract}
This paper presents a control architecture in which a direct adaptive control technique is used within the model predictive control framework, using the concurrent learning based approach, to compensate for model uncertainties.  At each time step, the control sequences and the parameter estimates are both used as the optimization arguments, thereby undermining the need for switching between the learning phase and the control phase, as is the case with hybrid-direct-indirect control architectures. The state derivatives are approximated using pseudospectral methods, which are vastly used for numerical optimal control problems. Theoretical results and numerical simulation examples are used to establish the effectiveness of the architecture.
\end{abstract}


\section{Introduction}\label{Introduction}
Model predictive control (MPC) refers to a class of control systems in which the current control action is obtained at each sampling instant by solving a finite(or infinite) horizon open-loop optimal control problem, using the current state of the system as the initial condition. While the result of the optimization is a sequence of control actions over the prediction horizon, only the first control action is applied at the current time; the process is repeated at the next time instant. Using this framework, it is easy and straightforward to cope with hard constraints on controls and states. As a result, MPC has received a lot of attention in the literature for both discrete and continuous time systems \cite{quasi_infinite_MPC1,quasi_infinite_MPC2,quasi_infinite_MPC3,quasi_infinite_MPC4,MPC5,MPC6,MPC8,MPC9,MPC10}. However, due to the dependence on dynamic predictive model, unaccounted modeling errors and dynamic uncertainties may render such model obsolete or inaccurate. In which case, the performance of the MPC can no longer be guaranteed. To overcome this challenge, a number of researchers have proposed some indirect-adaptive MPC approaches which allows for a way to incorporate learning in the MPC framework \cite{adaptiveMPC1,adaptiveMPC2,adaptiveMPC3,adaptiveMPC4}. Using these approaches, the system parameters are estimated online and open-loop optimal controllers are generated at each time step. One major challenge of this approach, however, is that it is difficult to guarantee stability, especially during parameter estimation transient phases \cite{mayne2000nonlinear}.

On the other hand, Direct adaptive control techniques modulate the system input to compensate for modeling uncertainties. Direct adaptive control can guarantee stability, even during harsh transients, however, they do not offer any long-term improvement due to model learning unless the system states are persistently exciting\footnote{A bounded vector signal $\Phi(t)$ is persistently exciting if for all $t>t_0$  there exists $T>0 \text{ and } \gamma>0$ such that $\int_t^{t+T}{\Phi(\tau)\Phi(\tau)^Td\tau}\ge\gamma\textbf{I}$.}. Furthermore, it is difficult to generate optimal solutions in the presence of input and state constraints with direct adaptive architectures \cite{chowdhary2013concurrent}.

 In \cite{chowdhary2013concurrent}, a Concurrent Learning based approach was proposed to address the above challenges. Concurrent learning (CL) \cite{Concurrent_Learning1, Concurrent_Learning2} uses recorded and current data concurrently to learn the parametric uncertainties in a dynamic system. Although it was first introduced for adaptation in the framework of Model Reference Adaptive Control (MRAC)  \cite{Concurrent_Learning1}, it can, as a result of the form of  it's update law, be easily extended to the general framework of adaptive control with linear-in-the-parameter (LP) structure. It was shown\cite{Concurrent_Learning1} that provided that the recorded data satisfies certain rank condition, then the adaptive weight convergence can occur without the system states being persistently exciting.

In this paper, a direct adaptive technique is used within the MPC framework, in conjunction with the Concurrent Learning based approach, to compensate for model uncertainties. At each time step, the control sequences and the parameter estimates are both used as the optimization arguments, thereby undermining the need for switching between the learning phase and the control phase, as is the case with hybrid-direct-indirect control architectures \cite{duarte1989combined,lavretsky2009combined} employed in \cite{chowdhary2013concurrent}. Moreover, the state derivatives are approximated at the recorded data points and over the prediction horizon using pseudospectral method.  Pseudospectral methods are vastly used in the numerical solution of optimal control problems \cite{pseudospectral1,pseudospectral2,pseudospectral3,pseudospectral4,pseudospectral5}. They belong to a class of direct collocation methods where the optimal control problem is transcribed to a nonlinear programming problem (NLP) by parameterizing the state and control using global polynomials, and collocating the differential-algebraic equations using nodes obtained from a Gaussian quadrature. With this approach, it is easier to formulate the problem without requiring that the system dynamics be linearly parameterizable.

The rest of the paper is organized as follows: In Section~\ref{Notation}, the notations used throughout the paper are introduced. In Section~\ref{Preliminary}, the CL problem is reformulated as an optimization problem to facilitate its inclusion into MPC framework. In Section~\ref{Main}, the problem setup for the concurrent learning model predictive control is given. In Section~\ref{Pseudospectral}, the pseudospectral implementation is presented. Numerical examples are given in Section~\ref{Numerical}. Conclusion follows in Section~\ref{Conclusion}.

\section{Notation}\label{Notation}
Throughout the paper, the following notations are used: $\mathbb{R}\text{ and }\mathbb{R}_+$ denotes the set of real numbers and positive real numbers respectively. All vectors and vector functions are treated as row vectors; that is, $\bx(\tau) = [x_1(\tau),\hdots,x_n(\tau)]\in\mathbb{R}^n$, where $n$ is the continuous time dimension of $\bx(\tau)$. The Euclidean norm of a vector $\bx\in\mathbb{R}^n$ is denoted by $\left\|\bx\right\|\triangleq\left(\bx^T\bx\right)^{1/2}$. The quadratic form $\left\|\bx\right\|_P^2\triangleq\bx^TP\bx$ is defined for any symmetric positive semi-definite matrix $P$. The expression $P\preceq Q$ means that the matrix $P-Q$ is negative semi-definite. The transpose of a matrix $\textbf{B}$ is denoted by $\textbf{B}^T$. The $i$th row of a matrix $\textbf{D}$ is denoted by $\textbf{D}_i$ .  The gradient of a scalar valued function $f(\bX)$ is a row vector denoted by $\nabla f(\bX)$.  The matrix $\f_\theta(\hdots,\theta_H,\dots)\in\mathbb{R}^{p\times n}$ denotes the partial derivative of a vector valued function $\f(\hdots,\theta,\hdots):\hdots\times\mathbb{R}^p\times\hdots\mapsto\mathbb{R}^n$ with respect to the argument $\theta\in\mathbb{R}^p$, evaluated at $\theta = \theta_H$. The $\mathcal{L}_2$ norm of a vector-valued signal $\bx: \mathbb{R}_+\mapsto\mathbb{R}^n$ is given by
\begin{align*}
\left\|\bx\right\|_2  = \left(\int_0^\infty\left\|\bx(\tau)\right\|^2d\tau\right)^{1/2}
\end{align*}$\mathcal{H}_n^{\alpha}$ denotes the $n$-vector valued Sobolev space over the interval $[-1,\hspace{2mm}1]$, with $\alpha$ denoting the number of classical derivatives of its elements. $\mathcal{H}^{\alpha}$ is given with respect to the $\mathcal{L}_2$ norm as,
\begin{align*}
\left\|x\right\|_{\left(\alpha\right)}=\left(\sum_{k=0}^\alpha{\left\|x^{\left(k\right)}\right\|_2^2}\right)^{1/2}.
\end{align*}
The space of all bounded functions is denoted by $\mathbb{L}_\infty$.
\section{Concurrent Learning}\label{Preliminary}
In this section, the original CL problem is reformulated as an optimization problem. This facilitates a direct inclusion into the MPC framework, as will be shown in the next section.
The class of system considered is described by the following set of nonlinear ordinary differential equations:

\begin{align}\label{sys}
\dot{\bx}(t)&=\f(\bx(t),\bu(t),\btheta), \hspace{5mm}\bx(0)=\bx_0,
\end{align}
where $\bx(t)\in\mathbb{R}^n$ is the vector of state variables, $\bu(t)\in\mathbb{R}^m$ is a vector of inputs, and $\btheta\in\bTheta\subset\mathbb{R}^p$ is a vector of unknown constant parameters. The following assumptions  are made for the system described in \eqref{sys}(see \cite{quasi_infinite_MPC1} also for a similar set of assumptions):

\renewcommand{\theenumi}{(A\arabic{enumi})}
\begin{enumerate}
\item{$\f:\mathbb{R}^n\times\mathbb{R}^m\times\mathbb{R}^p\rightarrow\mathbb{R}^n$ is twice continuously differentiable and $\f(\textbf{0},\textbf{0},\btheta)=\textbf{0}, \hspace{3mm} \forall \btheta\in\bTheta$. That is, $\textbf{0}\in\mathbb{R}^n$ is an equilibrium of the system with $\bu = \textbf{0}.$}
\item{$\bu(t)\in\mathcal{U}$, where $\mathcal{U}\subset\mathcal{R}^m$ is compact, convex, and $\textbf{0}\in\mathbb{R}^m$ is contained in the interior of $\mathcal{U}$.}
\item{The system in \eqref{sys} has a unique solution for any initial condition $\bx_0\in\mathbb{R}^n$ and any piecewise continuous and right continuous $\bu(.):[0,\infty)\rightarrow\mathcal{U}$, for all $\btheta\in\bTheta$.}
\end{enumerate}
Let
\begin{align}\label{eq:history}
\Omega(\bu_H(.))=\left\{\bx_H:\dot{\bx}_H-\f(\bx_H,\bu_H,\btheta)=\textbf{0}\right\}
\end{align}
be a set of recorded data generated by the system in \eqref{sys} from a given open-loop input sequence $\bu_H(\tau_H),\hspace{2mm}\tau_H\in[0,\hspace{2mm}T]$, and unknown constant parameter $\btheta$.

The following definition of persistence of excitation is adopted for the subsequent development in this paper.

\begin{definition}[Persistence of Excitation (PE)]
The system in \eqref{sys} is said to be persistently exciting with respect to the open loop input sequence $\bu(t)$, if there exists $\lambda_1,\lambda_2>0$ such that
\begin{align}\label{eq:PE}
\lambda_1 I\preceq\int_0^T{\f_\theta(\bx_H,\bu_H(\tau_H),\btheta_H)\f_\theta(\bx_H,\bu_H(\tau),\btheta_H)^Td\tau}\preceq\lambda_2 I,
\end{align}
for all $\btheta_H\in\bTheta$.
\end{definition}

Let $\hat{\btheta}$ be an estimate of the unknown parameter $\btheta$, the performance index
\begin{align}\label{eq:PI}
\epsilon(\hat{\btheta}(t))=\int_0^T{\left\|\dot{\bx}_H(\tau_H)-\f(\bx_H(\tau_H),\bu_H(\tau_H),\hat{\btheta}(t))\right\|^2d\tau_H}.
\end{align}
is defined to characterize the ``goodness"\footnote{This term is used to describe how close the response, generated using the estimate, is to the actual recorded data} of the parameter estimate $\hat{\btheta}$. Next, the relationship between $\epsilon(\hat{\btheta})$ and the parameter estimation error is exploited.

\begin{theorem}\label{Thm1}
Suppose the system in \eqref{sys} is persistently exciting with respect to the open-loop input sequence $\bu_H(t)$, then for all $\varepsilon>0$, there exists $\delta>0$, satisfying $\delta\rightarrow0\text{ as }\varepsilon\rightarrow0$, such that $\left\|\btheta-\hat{\btheta}\right\|\le\delta$ whenever $\epsilon(\hat{\btheta})\le\varepsilon$.
\end{theorem}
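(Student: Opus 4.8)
The idea is to express $\epsilon(\hat{\btheta})$ as a quadratic form in the estimation error $\btheta-\hat{\btheta}$ and then bound the associated Gram matrix from below using the PE condition \eqref{eq:PE}. Since every $\bx_H\in\Omega(\bu_H(.))$ satisfies $\dot{\bx}_H(\tau_H)=\f(\bx_H(\tau_H),\bu_H(\tau_H),\btheta)$, the integrand of \eqref{eq:PI} equals $\f(\bx_H,\bu_H,\btheta)-\f(\bx_H,\bu_H,\hat{\btheta})$. As $\f$ is twice continuously differentiable by (A1), the fundamental theorem of calculus along the segment $s\mapsto\hat{\btheta}+s(\btheta-\hat{\btheta})$ gives, for each $\tau_H$,
\begin{align*}
\dot{\bx}_H(\tau_H)-\f(\bx_H(\tau_H),\bu_H(\tau_H),\hat{\btheta})=(\btheta-\hat{\btheta})\,\bar{\f}_\theta(\tau_H),\qquad \bar{\f}_\theta(\tau_H):=\int_0^1\f_\theta\!\left(\bx_H(\tau_H),\bu_H(\tau_H),\hat{\btheta}+s(\btheta-\hat{\btheta})\right)ds,
\end{align*}
so that $\epsilon(\hat{\btheta})=(\btheta-\hat{\btheta})\,\bar{M}(\hat{\btheta})\,(\btheta-\hat{\btheta})^T$ with $\bar{M}(\hat{\btheta})=\int_0^T\bar{\f}_\theta(\tau_H)\bar{\f}_\theta(\tau_H)^T\,d\tau_H$ satisfying $0\preceq\bar{M}(\hat{\btheta})$.

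The next step is a lower bound on $\bar{M}(\hat{\btheta})$. On a compact set containing the state and input values $(\bx_H(\tau_H),\bu_H(\tau_H))$, $\tau_H\in[0,T]$, together with the relevant parameters, $\f_\theta$ is uniformly continuous; hence $\bar{\f}_\theta(\tau_H)$ converges to $\f_\theta(\bx_H(\tau_H),\bu_H(\tau_H),\btheta)$ uniformly in $\tau_H$ as $\hat{\btheta}\to\btheta$. Combining this with the lower PE bound $\lambda_1 I\preceq\int_0^T\f_\theta(\bx_H,\bu_H(\tau),\btheta)\f_\theta(\bx_H,\bu_H(\tau),\btheta)^T d\tau$ from \eqref{eq:PE}, there is $\rho>0$ such that $\tfrac{\lambda_1}{2}I\preceq\bar{M}(\hat{\btheta})$ whenever $\left\|\btheta-\hat{\btheta}\right\|\le\rho$, and therefore $\epsilon(\hat{\btheta})\ge\tfrac{\lambda_1}{2}\left\|\btheta-\hat{\btheta}\right\|^2$ on that ball, i.e. $\left\|\btheta-\hat{\btheta}\right\|\le\sqrt{2\epsilon(\hat{\btheta})/\lambda_1}$.

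It then remains to rule out estimates far from $\btheta$ that still make $\epsilon$ small. Since $\epsilon$ is continuous on the parameter set $\bTheta$ (taken compact) and, by identifiability of $\btheta$ from the recorded data, vanishes only at $\btheta$, it attains a strictly positive minimum $\mu$ on the compact set $\left\{\hat{\btheta}\in\bTheta:\left\|\btheta-\hat{\btheta}\right\|\ge\rho\right\}$; hence $\epsilon(\hat{\btheta})\le\varepsilon<\mu$ forces $\left\|\btheta-\hat{\btheta}\right\|\le\rho$, so one may take $\delta(\varepsilon)=\sqrt{2\varepsilon/\lambda_1}$ for $\varepsilon<\mu$, which indeed satisfies $\delta\rightarrow0$ as $\varepsilon\rightarrow0$.

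The delicate points are precisely the lower bound on $\bar{M}(\hat{\btheta})$ and the claim that $\epsilon$ vanishes only at $\btheta$: \eqref{eq:PE} is imposed at a frozen parameter value, whereas $\bar{M}$ involves the Jacobian averaged along the error segment, so the quadratic estimate is genuinely clean only in a neighbourhood of $\btheta$. When $\f$ is affine in $\btheta$ (the linear-in-the-parameters case), $\bar{\f}_\theta$ is independent of $\hat{\btheta}$, $\bar{M}(\hat{\btheta})$ is constant, \eqref{eq:PE} yields $\lambda_1 I\preceq\bar{M}$ directly, and the statement holds globally with $\delta=\sqrt{\varepsilon/\lambda_1}$; in the general nonlinear case one must splice the local quadratic estimate with the continuity/compactness argument above, and that splicing is where the real work concentrates.
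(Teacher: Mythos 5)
Your first two steps are exactly the paper's: substitute the dynamics of the recorded data into \eqref{eq:PI}, linearize the difference $\f(\bx_H,\bu_H,\btheta)-\f(\bx_H,\bu_H,\hat{\btheta})$ in the parameter error, and read off $\epsilon(\hat{\btheta})$ as a quadratic form whose Gram matrix is bounded below via \eqref{eq:PE}. The divergence, and the source of all your extra machinery, is that you treat \eqref{eq:PE} as ``imposed at a frozen parameter value.'' It is not: Definition 1 requires the two-sided bound to hold \emph{for all} $\btheta_H\in\bTheta$, i.e.\ uniformly in the parameter at which the Jacobian is evaluated. The paper exploits exactly this: the mean-value point $\btheta_H=\alpha\btheta+(1-\alpha)\hat{\btheta}$ lies on the segment joining $\btheta$ and $\hat{\btheta}$ (hence in $\bTheta$, implicitly taken convex), so \eqref{eq:PE} applies to it directly and yields the \emph{global} bound $\lambda_1\bigl\|\btheta-\hat{\btheta}\bigr\|^2\le\epsilon(\hat{\btheta})$, whence $\delta=\sqrt{\varepsilon/\lambda_1}$ with no localization, no compactness of $\bTheta$, and no identifiability argument. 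Your third paragraph therefore solves a problem the hypothesis already removes, and it does so at a cost: you invoke compactness of $\bTheta$ and the claim that $\epsilon$ vanishes only at $\btheta$, neither of which is among the theorem's hypotheses and neither of which you establish. As a proof of the theorem as stated, that is a genuine gap -- the statement is claimed for all $\varepsilon>0$, whereas your $\delta(\varepsilon)$ is only defined for $\varepsilon<\mu$ with $\mu$ resting on an unproved identifiability assertion.

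That said, your instinct about the delicate point is not wrong, and it exposes a soft spot in the paper's own argument. The componentwise mean value theorem does not produce a \emph{single} intermediate point for a vector-valued $\f$, so the paper's equation preceding \eqref{eq:MVT2} is, strictly speaking, an abuse; your integral-form remainder $\bar{\f}_\theta(\tau_H)=\int_0^1\f_\theta(\cdot,\cdot,\hat{\btheta}+s(\btheta-\hat{\btheta}))\,ds$ is the correct object. And you are right that even the uniform version of \eqref{eq:PE} does not immediately lower-bound the Gram matrix of an \emph{averaged} Jacobian (Jensen gives the inequality in the wrong direction; cancellation across the segment is possible in principle). The clean fix is not your local-plus-compactness splice, though: it is to state the PE condition for the averaged Jacobian itself (or to restrict to the LP case, where $\f_\theta$ is independent of $\btheta$ and the issue evaporates, as you note). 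If you rewrite your proof assuming the uniform reading of Definition 1 and flag the averaged-Jacobian caveat as a remark on the hypothesis rather than as a reason to add identifiability, you recover the paper's one-line conclusion.
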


\begin{proof}
After using \eqref{eq:history}, the equation in \eqref{eq:PI} can be written as
\begin{align}\label{eq:MVT1}
\epsilon(\hat{\btheta})=\int_0^T{\left\|\f(\bx_H,\bu_H(\tau),\btheta)-\f(\bx_H,\bu_H(\tau),\hat{\btheta})\right\|^2d\tau}.
\end{align}
It is clear to see, using the Mean Value Theorem, that
\begin{align}
\f(\bx_H,\bu_H(\tau),\btheta)-\f(\bx_H,\bu_H(\tau),\hat{\btheta})=\left(\btheta-\hat{\btheta}\right)\f_\theta(\bx_H,\bu_H(\tau),\btheta_H),
\end{align}
where
\begin{align*}
\btheta_H = \alpha\btheta+(1-\alpha)\hat{\btheta},\hspace{3mm}\alpha\in[0,\hspace{2mm}1].
\end{align*}
Thus, the equation in \eqref{eq:MVT1} becomes
\begin{align}\label{eq:MVT2}
\epsilon(\hat{\btheta})=\left(\btheta-\hat{\btheta}\right) \left(\int_0^T{\f_\theta(\bx_H,\bu_H(\tau),\btheta_H)\f_\theta(\bx_H,\bu_H(\tau),\btheta_H)^Td\tau}\right)\left(\btheta-\hat{\btheta}\right)^T.
\end{align}
Now, using \eqref{eq:PE} , it follows that
\begin{align}
\lambda_1\left\|\btheta-\hat{\btheta}\right\|^2\le\epsilon(\hat{\btheta}).
\end{align}
Thus, the conclusion follows by setting 
\begin{align*}
\delta = \frac{\sqrt{\varepsilon}}{\lambda_1}.
\end{align*}
\end{proof}

\begin{remark}
The model error given by the performance index in \eqref{eq:PI} requires the computation of the state derivatives. This can be computed accurately using numerical smoothing techniques \cite{Concurrent_Learning2,Concurrent_Learning4}. However, as will be shown in subsequent sections, the need to compute state derivatives is abated by transforming the problem using pseudospectral approximation.
\end{remark}

\begin{remark}
Theorem \ref{Thm1} shows that the smaller the value of the performance index in \eqref{eq:PI}, the smaller the 2-norm of the parameter estimation error. Thus, the parameter estimation error can be reduced as much as possible by setting
\begin{align*}
\hat{\btheta} = \arg\min_{\btheta'}\epsilon(\btheta').
\end{align*}

\end{remark}

\section{Concurrent Learning Adaptive Model Predictive Control Scheme}\label{Main}
In this section, the problem setup for the concurrent learning adaptive model predictive control is given. Following each measurement, an open-loop optimal control is solved. The objective function to minimize comprises of the performance index given in \eqref{eq:PI}, and an additional cost functional which penalizes the state and control in accordance with standard MPC setup. Moreover, the arguments of the optimization is the pair $(\bu(.),\bar{\btheta})$. In other words, at each time step, the values of the open-loop control sequence $\bu(.)$ and a constant parameter estimate $\bar{\btheta}$ that minimizes the combined cost functional is found. In particular, the open-loop optimal control problem at time $t$, with initial state $\bx(t)$, is formulated as

\begin{align}\label{opt}
\min_{(\bar{\bu}(.),\bar{\btheta})}J(\bx(t),\bar{\bu}(.),\bar{\btheta}),
\end{align}
where
\begin{align}\label{obj}
J(\bx(t),\bar{\bu}(.),\bar{\btheta}) = \int_t^\infty{\left(\left\|\bar{\bx}(\tau,\bx(t))\right\|_Q^2+\left\|\bar{\bu}(\tau)\right\|_R^2\right)d\tau} + \gamma\epsilon(\bar{\btheta}),
\end{align}
subject to
\begin{align}\label{state1}
&\dot{\bar{\bx}}=\f(\bar{\bx},\bar{\bu},\bar{\btheta}),\hspace{3mm}\bar{\bx}(t,\bx(t))=\bx(t) \refstepcounter{equation}\subeqn\\\label{const1}
&\bar{\bu(\tau)}\in\mathcal{U},\hspace{3mm}\tau\in[t,\hspace{2mm}\infty)\subeqn,
\end{align}
where $\gamma>0$, and $Q\in\mathbb{R}^{n\times n}$ and $R\in\mathbb{R}^{m\times m}$ are positive definite symmetric weighting matrices; $\bar{\bx}(\tau,\bx(t))$ is the state trajectory of the system in \eqref{state1}, starting from the initial state $\bx(t)$, and driven by the open-loop control sequence $\bu(\tau), \tau\in[t,\hspace{2mm}\infty)$. Without loss of generality, an infinite-horizon nonlinear model predictive control problem is considered. For a finite-horizon\footnote{Interested readers are directed to references \cite{quasi_infinite_MPC1,quasi_infinite_MPC2,quasi_infinite_MPC3,quasi_infinite_MPC4}.} case, the problem can be setup to include an additional quadratic terminal cost chosen to ensure that a closed-loop asymptotic stability is guaranteed. 

According to the receding horizon philosophy, the resulting open-loop optimal control profile is applied to the system only until the next measurement becomes available. Let $T_s$ be the measurement sampling time, and $(\bar{\bu}^*(\tau,\bx(t)),\bar{\btheta}^*(\bx(t))$ the optimal solution to the optimization problem \eqref{opt}--\eqref{const1}, then the closed-loop control and parameter estimate are given by

\begin{align}\label{eq:clp_u}
\bu(\tau)&=\bar{\bu}^*(\tau,\bx(t)),\hspace{2mm}\tau\in[t,\hspace{2mm}t+T_s]\\\nonumber
\hat{\btheta}(\tau) &=\bar{\btheta}^*(\bx(t)) \\\label{eq:clp_theta}&\hspace{3mm}+ k_\theta^{-1} \int_t^\tau{\int_0^T{\left(\dot{\bx}_H-\f(\bx_H,\bu_H(\tau_H),\hat{\btheta}(t))\right)\Gamma(\tau_H)^Td\tau_H} d\sigma},
\end{align}
within the time interval $\tau\in(t,\hspace{2mm}t+T_s)$. $\hat{\btheta}=\bar{\btheta}^*(\bx(\tau))\text{ for } \tau\in\left\{0,T_s,2T_s,\hdots\right\}$. Moreover, $\Gamma :\mathbb{R}_+\rightarrow \mathbb{R}^{n\times p}$ is  chosen to satisfy
\begin{align}\label{eq:iniq}
\lambda_1 I\preceq\int_0^T{\f_\theta(\bx_H,\bu_H(\tau_H),\btheta_H)\Gamma(\tau_H)^Td\tau_H}\preceq\lambda_2 I,
\end{align}
for all $\btheta_H\in\bTheta$, and $k_\theta$ is a positive constant, with $k_\theta^{-1}$ being the concurrent learning gain. It should be noted that the optimal parameter $\bar{\btheta}^*$ is merely a decision variable internal to the optimization problem in \eqref{opt}--\eqref{const1}. The update law in \eqref{eq:clp_theta} is given as a by product of the proposed method. The asymptotic convergence to the true parameter, using the given update law, is shown subsequently. Knowledge of the true parameter in the system can be used for several purposes as desired by the user. For instance, diagnostic purposes, as a means to switch between controllers, etc. Once new measurements become available (after $T_s$ time units), the optimization problem in \eqref{opt}--\eqref{const1} is solved again to find new input profiles, the closed-loop control and parameter estimate in \eqref{eq:clp_u} and \eqref{eq:clp_theta} are then applied within the time interval  $\tau\in[t+T_s,\hspace{2mm}t+2T_s],$ and so on. Note that, while the parameter estimate at time $t$ is a function of the state measurement $\bx(t)$, it is treated as a constant throughout the prediction window in the optimization problem in \eqref{opt}--\eqref{const1}. This applies also to all other measurement points $t+kT_s,\hspace{2mm}k = 1,2,\hdots$. Consequently, the closed-loop system is described by the ordinary differential equation
\begin{align}\label{eq:clp}
\dot{\bx}(t) = \f(\bx(t),\bu(t),\hat{\btheta}(t)).
\end{align}
Next, in the following subsection, the stability properties of the closed-loop system is considered.

\subsection{Stability Analysis}
The following standard definitions, adapted from \cite{khalil_book}, describe the notion of stability as used in this paper.

\begin{definition}[Stability]\label{def1}
The equilibrium point $\bx=\textbf{0}$  of the system in \eqref{sys} is stable if for each $\varepsilon>0$ there exists $\eta(\varepsilon)>0$, such that $\left\|\bx(0)\right\|<\eta(\varepsilon)$ implies that $\left\|\bx(t)\right\|<\varepsilon$ for all $t\ge0$.
\end{definition}

\begin{definition}[Asymptotic Stability]\label{def2}
The equilibrium point $\bx=\textbf{0}$  of the system in \eqref{sys} is asymptotically stable if it is stable and $\eta$ can be chosen such that $\left\|\bx(0)\right\|<\eta$ implies that $\bx(t)\rightarrow\textbf{0}\text{ as }t\rightarrow0$.
\end{definition}

Next, in order to facilitate subsequent stability analysis, an important property of the optimal value function is examined. For simplicity of exposition, except required for clarity, the shorthands
\begin{align*}
J(\bx(t))&\triangleq J(\bx(t),\bar{\bu},\bar{\bx},\bar{\btheta})\\
J^*(\bx(t))&\triangleq J(\bx(t),\bar{\bu}^*,\bar{\bx}^*,\bar{\btheta}^*)
\end{align*}
are used. 

\begin{lemma}\label{lemma1}
Suppose the system in \eqref{sys} is persistently exciting with respect to the open-loop input sequence $\bu_H(t)$. If the $k_\theta$ in \eqref{eq:clp_theta} is chosen to satisfy the sufficient condition
\begin{align}
k_\theta\ge\frac{T_s\lambda_3}{\lambda_4},
\end{align}
where
\begin{gather*}
\lambda_3=\max\left\{\lambda_1^3,\lambda_1^2\lambda_2,\lambda_1\lambda_2^2,\lambda_2^3\right\},
\lambda_4=\min\left\{\lambda_1^2,\lambda_1\lambda_2,\lambda_2^2\right\}\\
\left[\lambda_1\text{ and }\lambda_2\text{ are given in \eqref{eq:PE} and \eqref{eq:iniq}}\right]
\end{gather*}
then the optimal value function $J(\bx(t),\bu(\tau),\hat{\btheta}(\tau))\triangleq J^*(\bx(t))$ satisfies 
\begin{align}
J^*(\bx(s))\le J^*(\bx(t)) - \int_t^s{\left(\left\|\bx(\tau)\right\|^2_Q+\left\|\bu^*(\tau)\right\|^2_R+2\beta\left\|\widetilde{\btheta}(\tau)\right\|^2\right)d\tau},
\end{align}
for all $s\in(t,\hspace{2mm}t+T_s]$, where 
\begin{align}
\beta = \frac{\gamma k_\theta\lambda_4}{2\left(T_s\lambda_2+k_\theta\right)^2},
\end{align}
and $\widetilde{\btheta}(\tau)\triangleq\btheta-\hat{\btheta}(\tau)$ is the parameter estimation error.
\end{lemma}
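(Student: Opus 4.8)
The plan is to run the standard receding-horizon Lyapunov-decrease argument for the optimal value $J^*$, augmented so as to keep track of the learning term $\gamma\epsilon(\cdot)$. Fix a measurement time $t$ and $s\in(t,\hspace{1mm}t+T_s]$. At time $t$ the optimizer in \eqref{opt}--\eqref{const1} returns $(\bar\bu^*(\cdot,\bx(t)),\bar\btheta^*(\bx(t)))$ with associated predicted trajectory $\bar\bx^*$. I would first construct a \emph{feasible} (in general suboptimal) candidate for the problem posed at time $s$: the tail $\bar\bu^*(\tau,\bx(t))$, $\tau\ge s$, of the time-$t$ optimal control, together with the constant parameter $\hat\btheta(s)$ produced by the update law \eqref{eq:clp_theta}; feasibility is immediate from (A2) and \eqref{const1}. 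By optimality $J^*(\bx(s))$ is then bounded above by the cost of this candidate, and the problem reduces to showing that the candidate's cost does not exceed $J^*(\bx(t))-\int_t^s\big(\|\bx(\tau)\|_Q^2+\|\bu^*(\tau)\|_R^2+2\beta\|\widetilde\btheta(\tau)\|^2\big)\,d\tau$.

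I would split the candidate's cost into a running part and a learning part $\gamma\epsilon(\hat\btheta(s))$. The running part is handled by the usual telescoping of the stage cost over $[t,s]$ along the closed loop \eqref{eq:clp}, which supplies the $-\int_t^s(\|\bx\|_Q^2+\|\bu^*\|_R^2)\,d\tau$ terms; the one nonstandard point is the mismatch between the predicted trajectory $\bar\bx^*$ (propagated with the frozen estimate $\bar\btheta^*$) and the actual trajectory driven by the moving estimate $\hat\btheta(\cdot)$, a drift that \eqref{eq:clp_theta} makes $O(k_\theta^{-1})$ and which is absorbed for $k_\theta$ large. For the learning part I would argue as in the proof of Theorem~\ref{Thm1}: substituting the recorded-data identity \eqref{eq:history} and applying the Mean Value Theorem gives $\epsilon(\hat\btheta(\tau))=\widetilde\btheta(\tau)M(\tau)\widetilde\btheta(\tau)^T$ with $\lambda_1 I\preceq M(\tau)\preceq\lambda_2 I$ from the PE condition \eqref{eq:PE}, while the same substitution in \eqref{eq:clp_theta} shows that $\widetilde\btheta$ evolves linearly on the interval, $\widetilde\btheta(\tau)=\widetilde\btheta(t)\big(I-k_\theta^{-1}(\tau-t)N\big)$, driven by a matrix $N$ with $\lambda_1 I\preceq N\preceq\lambda_2 I$ from \eqref{eq:iniq}. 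Since $N\succ0$ the update direction $\widetilde\btheta(t)N$ has a strictly positive component along $\widetilde\btheta(t)$, so under the hypothesis on $k_\theta$ the map $\tau\mapsto\|\widetilde\btheta(\tau)\|$ is non-increasing on $[t,s]$; feeding this, the near-constancy of $M(\cdot)$ in the estimate, and $\epsilon(\hat\btheta(t))\ge\lambda_1\|\widetilde\btheta(t)\|^2$ into $\gamma\epsilon(\hat\btheta(t))-\gamma\epsilon(\hat\btheta(s))$ and passing between $\|\widetilde\btheta(\tau)\|$, $\|\widetilde\btheta(t)\|$ and $\|\widetilde\btheta(s)\|$ through the factor $I-k_\theta^{-1}(\tau-t)N$ yields a reduction of the learning term of at least $2\beta\int_t^s\|\widetilde\btheta(\tau)\|^2\,d\tau$, with $\beta$ as stated. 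Adding the running and learning estimates gives the claimed inequality.

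The step I expect to be the main obstacle is the quantitative learning estimate. The Mean-Value matrices $M(\tau)$ and $N$ involve the moving intermediate parameter $\btheta_H\in\bTheta$ and are neither constant nor symmetric a priori, so every bound must be made uniform over $\bTheta$ using only \eqref{eq:PE} and \eqref{eq:iniq}; in particular, comparing $\epsilon$ at $t$ and at $s$ relies on $M(s)$ being close to $M(t)$ because $\hat\btheta(s)-\hat\btheta(t)=O(k_\theta^{-1})$ and $\f$ is $C^2$, and the constants $\lambda_3,\lambda_4$ together with the denominator $(T_s\lambda_2+k_\theta)^2$ in $\beta$ must be tracked tightly enough that the threshold $k_\theta\ge T_s\lambda_3/\lambda_4$ comes out exactly as stated rather than up to a fudge factor. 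A secondary technical point is the running-cost bookkeeping in the presence of the within-interval parameter drift, which calls for local Lipschitz dependence of the cost-to-go on the initial state and the parameter and is a second place where the lower bound on $k_\theta$ is invoked.
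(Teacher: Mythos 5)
Your proposal follows essentially the same route as the paper's proof: bound $J^*(\bx(s))$ by the cost of the feasible candidate formed from the tail of the time-$t$ optimal control together with the updated estimate $\hat{\btheta}(s)$, telescope the stage cost over $[t,\,s]$, write $\epsilon(\hat{\btheta}(\tau))$ as a quadratic form $\widetilde{\btheta}\Phi\widetilde{\btheta}^T$ via the Mean Value Theorem, use the linear evolution $\widetilde{\btheta}(\tau)=\widetilde{\btheta}(t)\bigl(I-k_\theta^{-1}(\tau-t)\Psi\bigr)$ from \eqref{eq:clp_theta}, and extract the $2\beta\|\widetilde{\btheta}(\tau)\|^2$ decrease from the bounds \eqref{eq:PE} and \eqref{eq:iniq} with exactly the constants $\lambda_3,\lambda_4$ and the factor $(T_s\lambda_2+k_\theta)^{-2}$. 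The two technical concerns you flag (the predicted-versus-actual trajectory mismatch and the $\tau$-dependence of the Mean Value matrices) are points the paper's proof passes over silently rather than resolves, and your unneeded claim that $\|\widetilde{\btheta}(\tau)\|$ is non-increasing is replaced in the paper by the weaker bound $\|\widetilde{\btheta}(\tau)\|\le(1+T_sk_\theta^{-1}\lambda_2)\|\widetilde{\btheta}(t)\|$, which is all that is required.
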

\begin{proof}
At time $t$, the optimal value function, using the closed-loop control in \eqref{eq:clp_u} and parameter estimate in \eqref{eq:clp_theta}, is given by
\begin{align}
J^*(\bx(t)) =  \int_t^\infty{\left(\left\|\bar{\bx}^*(\tau,\bx(t))\right\|_Q^2+\left\|\bu(\tau)\right\|_R^2\right)d\tau} + \gamma\epsilon(\hat{\btheta}(t)).
\end{align}
Now, for all $s\in(t,\hspace{2mm}t+T_s]$, the value of the objective cost functional in \eqref{eq:PI} is given as:
\begin{align}
J(\bx(s)) &=  \int_s^\infty{\left(\left\|\bar{\bx}^*(\tau,\bx(t))\right\|_Q^2+\left\|\bu(\tau)\right\|_R^2\right)d\tau} + \gamma\epsilon(\hat{\btheta}(s)),\\\nonumber
&=  \int_t^\infty{\left(\left\|\bar{\bx}^*(\tau,\bx(t))\right\|_Q^2+\left\|\bu(\tau)\right\|_R^2\right)d\tau} + \gamma \epsilon(\hat{\btheta}(s)) \\&\hspace{2cm}-\int_t^s{\left(\left\|\bar{\bx}^*(\tau,\bx(t))\right\|_Q^2+\left\|\bu(\tau)\right\|_R^2\right)d\tau}\\\nonumber
& = J^*(\bx(t)) -  \int_t^s{\left(\left\|\bar{\bx}^*(\tau,\bx(t))\right\|_Q^2+\left\|\bu(\tau)\right\|_R^2\right)d\tau}\\\label{eq:temp3}&\hspace{5cm}+\gamma\left(\epsilon(\hat{\btheta}(s))-\epsilon(\hat{\btheta}(t))\right).
\end{align}
For the sake of clarity, let 
\begin{align*}
\Phi&=\int_0^T{\f_\theta(\bx_H,\bu_H(\tau_H),\btheta_H)\f_\theta(\bx_H,\bu_H(\tau_H),\btheta_H)^Td\tau_H}\\
\Psi&=\int_0^T{\f_\theta(\bx_H,\bu_H(\tau_H),\btheta_H)\Gamma(\tau_H)^Td\tau_H}.
\end{align*}
Thus
\begin{align}
\epsilon(\hat{\btheta}(\tau)) = \widetilde{\btheta}(\tau)\Phi\widetilde{\btheta}(\tau)^T,
\end{align}
which implies that
\begin{align}\nonumber
\epsilon(\hat{\btheta}(s))-\epsilon(\hat{\btheta}(t))&=\left(\widetilde{\btheta}(s)-\widetilde{\btheta}(t)\right)\Phi\left(\widetilde{\btheta}(s)-\widetilde{\btheta}(t)\right)^T \\\label{eq:temp1}&\hspace{3cm}+ 2\widetilde{\btheta}(t)\Phi\left(\widetilde{\btheta}(s)-\widetilde{\btheta}(t)\right)^T.
\end{align}
From \eqref{eq:clp_theta}, we have that
\begin{align*}
\widetilde{\btheta}(s)-\widetilde{\btheta}(t)=k_\theta^{-1} \int_t^s{\int_0^T{\left(\dot{x}_H-\f(\bx_H,\bu_H(\tau_H),\hat{\btheta}(t))\right)\Gamma(\tau_H)^Td\tau_H} d\sigma},
\end{align*}
which, after using the Mean Value Theorem, and following similar argument in \eqref{eq:MVT1} and \eqref{eq:MVT2} yields
\begin{align}\label{eq:temp4}
\widetilde{\btheta}(s)-\widetilde{\btheta}(t)=-k_\theta^{-1} \int_t^s{\widetilde{\btheta}(t)\Psi d\tau_2} = -(s-t)k_\theta^{-1}\widetilde{\btheta}(t)\Psi.
\end{align}
Now, by using the properties in \eqref{eq:PE} and \eqref{eq:iniq}, it is clear that
\begin{align*}
\left.
\begin{array}{r}
\lambda_1I\preceq\Phi\preceq\lambda_2I\\
\lambda_1I\preceq\Psi\preceq\lambda_2I
\end{array}\right\}\Rightarrow
\left\{\begin{array}{l}
\widetilde{\btheta}(t)\Psi\Phi\Psi^T\widetilde{\btheta}(t)^T\le\lambda_3\left\|\widetilde{\btheta}(t)\right\|^2\\
\lambda_4\left\|\widetilde{\btheta}(t)\right\|^2\le\widetilde{\btheta}(t)\Phi\Psi^T\widetilde{\btheta}(t)^T
\end{array}\right..
\end{align*}
Thus, \eqref{eq:temp1} becomes
\begin{align}\nonumber
\epsilon(\hat{\btheta}(s))-\epsilon(\hat{\btheta}(t))&=(s-t)^2k_\theta^{-2}\widetilde{\btheta}(t)\Psi\Phi\Psi^T\widetilde{\btheta}(t)^T-2(s-t)k_\theta^{-1}\widetilde{\btheta}(t)\Phi\Psi^T\widetilde{\btheta}(t)^T\\\nonumber
&\le(s-t)^2k_\theta^{-2}\lambda_3\left\|\widetilde{\btheta}(t)\right\|^2-2(s-t)k_\theta^{-1}\lambda_4\left\|\widetilde{\btheta}(t)\right\|^2\\\nonumber
&=(s-t)k_\theta^{-1}\left((s-t)k_\theta^{-1}\lambda_3-\lambda_4\right)\left\|\widetilde{\btheta}(t)\right\|^2\\&\hspace{5cm}-\int_t^s{k_\theta^{-1}\lambda_4\left\|\widetilde{\btheta}(t)\right\|^2d\tau}.
\end{align}
Now, since $ (s-t)\le T_s$,
\begin{align*}
k_\theta\ge\frac{T_s\lambda_3}{\lambda_4}\Rightarrow k_\theta\ge\frac{(s-t)\lambda_3}{\lambda_4}\Rightarrow(s-t)k_\theta^{-1}\lambda_3-\lambda_4\le0.
\end{align*}
Thus
\begin{align}\label{eq:temp5}
\epsilon(\hat{\btheta}(s))-\epsilon(\hat{\btheta}(t))\le-\int_t^s{k_\theta^{-1}\lambda_4\left\|\widetilde{\btheta}(t)\right\|^2d\tau}.
\end{align}
Moreover, from \eqref{eq:temp4}, we have that
\begin{align}\label{eq:temp5b}
\widetilde{\btheta}(\tau)=\widetilde{\btheta}(t)\left(-(\tau-t)k_\theta^{-1}\Psi+I\right),\hspace{3mm}\tau\in(t,\hspace{2mm}t+T_s]
\end{align}
which implies that
\begin{align}
\left\|\widetilde{\btheta}(\tau)\right\|^2\le\left(T_sk_\theta^{-1}\lambda_2+1\right)^2\left\|\widetilde{\btheta}(t)\right\|^2
\end{align}
Thus, the inequality in \eqref{eq:temp5} yields
\begin{align}\label{eq:temp6}
\epsilon(\hat{\btheta}(s))-\epsilon(\hat{\btheta}(t))\le-\int_t^s{\frac{2\beta}{\gamma}\left\|\widetilde{\btheta}(\tau)\right\|^2d\tau}.
\end{align}
Substituting \eqref{eq:temp6} in \eqref{eq:temp3} yields
\begin{align}
J(\bx(s))\le J^*(\bx(t)) - \int_t^s{\left(\left\|\bx(\tau)\right\|^2_Q+\left\|\bu^*(\tau)\right\|^2_R+2\beta\left\|\widetilde{\btheta}(\tau)\right\|^2\right)d\tau}.
\end{align}
Finally, using the optimality of the value function at $s$, it follows that
\begin{align}
J^*(\bx(s))\le J(\bx(s)),
\end{align}
which implies that
\begin{align*}
J^*(\bx(s))\le J^*(\bx(t)) - \int_t^s{\left(\left\|\bx(\tau)\right\|^2_Q+\left\|\bu^*(\tau)\right\|^2_R+2\beta\left\|\widetilde{\btheta}(\tau)\right\|^2\right)d\tau}.
\end{align*}
\end{proof}

Now, the asymptotic stability result for the closed-loop system in \eqref{eq:clp} is stated in the following theorem.

\begin{theorem}
Suppose that the assumptions (A1)--(A3) are satisfied, also that the sufficient condition and the hypothesis of Lemma~\ref{lemma1} is satisfied, and that the open-loop optimal control problem in \eqref{opt}--\eqref{const1} is feasible for all $t>0$, then the closed-loop system in \eqref{eq:clp}, in the absence of disturbance, with the model predictive control in \eqref{eq:clp_u} and the concurrent learning based update law in \eqref{eq:clp_theta}, is asymptotically stable with asymptotic parameter convergence.
\end{theorem}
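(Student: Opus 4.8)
The plan is to use the optimal value function $V(t)\triangleq J^*(\bx(t))$ as a Lyapunov function along the closed-loop trajectory \eqref{eq:clp}. The inequality of Lemma~\ref{lemma1} holds on every interval $(t,\,t+T_s]$ and can be restarted at each sampling instant, so chaining it over $t\in\{0,T_s,2T_s,\dots\}$ gives, for all $s\ge0$,
\begin{align*}
V(s)\le V(0)-\int_0^s\left(\left\|\bx(\tau)\right\|_Q^2+\left\|\bu^*(\tau)\right\|_R^2+2\beta\left\|\widetilde{\btheta}(\tau)\right\|^2\right)d\tau .
\end{align*}
Hence $V$ is nonincreasing and bounded below by zero, so it converges to some $V_\infty\ge0$, and
\begin{align*}
\int_0^\infty\left(\left\|\bx(\tau)\right\|_Q^2+\left\|\bu^*(\tau)\right\|_R^2+2\beta\left\|\widetilde{\btheta}(\tau)\right\|^2\right)d\tau\le V(0)-V_\infty<\infty .
\end{align*}

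Next I would sandwich $V$ between two class-$\mathcal{K}$ functions of $\|\bx\|$ on a neighbourhood of the origin. The lower bound $V(\bx)\ge\alpha_1(\|\bx\|)$ follows from $V(\bx(t))\ge\int_t^{t+\delta}\left\|\bar{\bx}^*(\tau,\bx(t))\right\|_Q^2d\tau$ together with a short-horizon estimate on the predicted trajectory that uses the local Lipschitz property from (A1) and the compactness of $\mathcal{U}$ from (A2). The upper bound $V(\bx)\le\alpha_2(\|\bx\|)$ is obtained by exhibiting one admissible pair: taking the decision variable $\bar{\btheta}=\btheta$ makes $\gamma\epsilon(\bar{\btheta})=0$, and for $\|\bx(t)\|$ small a local stabilizing feedback drives $\bar{\bx}$ to the origin exponentially while respecting $\mathcal{U}$, leaving a running cost of order $\|\bx(t)\|^2$. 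Together with $V(\textbf{0})=0$ -- attained by $\bar{\bu}\equiv\textbf{0}$, $\bar{\btheta}=\btheta$, since $\f(\textbf{0},\textbf{0},\btheta)=\textbf{0}$ by (A1) -- this yields stability in the sense of Definition~\ref{def1}: if $\|\bx(0)\|<\eta$ then $V(0)\le\alpha_2(\eta)$, so $\alpha_1(\|\bx(t)\|)\le V(t)\le\alpha_2(\eta)$ and hence $\|\bx(t)\|\le\alpha_1^{-1}(\alpha_2(\eta))$ for all $t\ge0$.

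For attractivity I would invoke Barbalat's lemma. Stability and the sandwich bound keep $\bx(t)$ bounded, $\bu(t)\in\mathcal{U}$ is bounded, and $\hat{\btheta}(t)$ is bounded because $\bar{\btheta}^*\in\bTheta$ and, by \eqref{eq:temp5b} with $\lambda_1 I\preceq\Psi\preceq\lambda_2 I$, $\|\widetilde{\btheta}(\tau)\|$ stays within a fixed multiple of $\|\widetilde{\btheta}(kT_s)\|$ on each sampling interval; hence the right-hand side of \eqref{eq:clp} is bounded, so $\bx$ and therefore $\tau\mapsto\left\|\bx(\tau)\right\|_Q^2$ are uniformly continuous, and since the integral of the latter over $[0,\infty)$ is finite, Barbalat's lemma gives $\left\|\bx(t)\right\|_Q\to0$; as $Q\succ0$ this forces $\bx(t)\to\textbf{0}$, which with the established stability is asymptotic stability in the sense of Definition~\ref{def2}. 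For the asymptotic parameter convergence I would exploit the extra term: $\int_0^\infty\left\|\widetilde{\btheta}(\tau)\right\|^2d\tau<\infty$ gives $\sum_{k\ge0}\int_{kT_s}^{(k+1)T_s}\left\|\widetilde{\btheta}(\tau)\right\|^2d\tau<\infty$, and by \eqref{eq:temp5b} each term is at least $T_s\left(1-T_sk_\theta^{-1}\lambda_2\right)^2\left\|\widetilde{\btheta}(kT_s)\right\|^2$; the sufficient condition $k_\theta\ge T_s\lambda_3/\lambda_4$ makes $T_sk_\theta^{-1}\lambda_2\le(\lambda_1/\lambda_2)^2$, so the factor is positive when $\lambda_1<\lambda_2$ (the degenerate case $\lambda_1=\lambda_2$ being immediate from \eqref{eq:temp5b}). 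Thus $\widetilde{\btheta}(kT_s)\to\textbf{0}$, and propagating through each interval gives $\widetilde{\btheta}(t)\to\textbf{0}$, i.e. $\hat{\btheta}(t)\to\btheta$.

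The step I expect to be the main obstacle is the upper sandwich bound $V(\bx)\le\alpha_2(\|\bx\|)$ near the origin: producing an admissible control of arbitrarily small cost is not guaranteed by (A1)--(A3) alone but needs local stabilizability of \eqref{sys} at the origin -- equivalently, continuity of the value function there -- which is standard in the MPC literature and implicit in the assumed feasibility, and worth stating explicitly. The remaining pieces -- chaining Lemma~\ref{lemma1}, the lower bound, and the Barbalat and summation arguments -- are routine.
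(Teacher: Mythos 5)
Your proposal is correct and follows the same skeleton as the paper's proof: the optimal value function serves as the Lyapunov function, the dissipation inequality of Lemma~\ref{lemma1} is chained over sampling instants to conclude $\bx,\widetilde{\btheta}\in\mathcal{L}_2$, and attractivity of the state follows from Barbalat's lemma. Two sub-arguments genuinely differ. First, the paper works with the augmented function $V=J^*(\bx(t))+\int_0^t\beta\|\widetilde{\btheta}(\tau)\|^2d\tau$ and obtains $\|\widetilde{\btheta}(t)\|\to0$ by a second application of Barbalat's lemma, after showing $\dot{\widetilde{\btheta}}\in\mathbb{L}_\infty$ from \eqref{eq:temp5b} to get uniform continuity; you instead keep $V=J^*$ and extract parameter convergence directly from square-integrability plus the explicit affine recursion \eqref{eq:temp5b}, lower-bounding each $\int_{kT_s}^{(k+1)T_s}\|\widetilde{\btheta}(\tau)\|^2d\tau$ by a positive multiple of $\|\widetilde{\btheta}(kT_s)\|^2$. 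Your route is more elementary (no uniform-continuity step for $\widetilde{\btheta}$) at the cost of the small verification that $1-T_sk_\theta^{-1}\lambda_2>0$ under the gain condition, which you carry out correctly. Second, where you build the class-$\mathcal{K}$ sandwich on $V$ explicitly, the paper asserts positivity and continuity of $J^*$ at the origin by citing Lemma A.1 of \cite{chen1997stability} and then invokes the ``standard argument.'' Your closing remark is exactly on target: the upper bound $V\le\alpha_2(\|\bx\|)$ does not follow from (A1)--(A3) alone but requires the local stabilizability (equivalently, continuity of the value function at the origin) that is hidden in that citation and in the standing feasibility assumption, so flagging it explicitly is a point in your version's favor rather than a gap.
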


\begin{proof} The proof stated here follows similar argument given in \cite{quasi_infinite_MPC1}, with modifications made to include the parameter convergence. First, define the function $V(\bx,\widetilde{\btheta})$ for the closed-loop system in \eqref{eq:clp} as follows:
\begin{align}
V(\bx(t),\widetilde{\btheta}(t))=J^*(\bx(t)) + \int_0^t{\beta\left\|\widetilde{\btheta}(\tau)\right\|^2d\tau}.
\end{align}
Then, $V(\bx,\widetilde{\btheta})$ has the following properties:
\begin{itemize}
\item{$V(\textbf{0},\textbf{0})=0$ and $V(\bx,\widetilde{\btheta})>0$ for $(\bx,\widetilde{\btheta})\neq(\textbf{0},\textbf{0})$, }
\item{$V(\bx,\widetilde{\btheta})$ is continuous at $(\bx,\widetilde{\btheta})=(\textbf{0},\textbf{0})$,}
\item{along the trajectory of the closed-loop system starting from any $\bx_0\in \textbf{X}$, and for $0\le t_1\le t_2\le\infty$
\begin{align}\label{eq:temp7}
V(\bx(t_2),\widetilde{\btheta}(t_2))-V(\bx(t_1),\widetilde{\btheta}(t_1))\le-\int_{t_1}^{t_2}{\left(\left\|\bx(\tau)\right\|^2_Q+\beta\left\|\widetilde{\btheta}(\tau)\right\|^2\right)d\tau.}
\end{align}
}
\end{itemize}
To prove the first property, note that $\widetilde{\btheta}=\textbf{0}\Rightarrow \hat{\btheta}=\btheta$, which , from \eqref{eq:MVT1}, implies that $\epsilon({\btheta})=0$. Thus, It follows from Lemma A.1 in reference \cite{chen1997stability} that $J^*(\textbf{0})=0$. Consequently, $V(\textbf{0},\textbf{0})=0$. Similarly, the second property follows from the continuity of $\f(.,.,.)$ over $\bTheta$, and Lemma A.1 in reference \cite{chen1997stability}. The third property is due to Lemma~\ref{lemma1} and $R>0$. As a result, using standard argument (see \cite{khalil_book}), it can be shown that the equilibrium $(\bx,\widetilde{\btheta})=(\textbf{0},\textbf{0})$ is stable, in accordance with the Definition~\ref{def1}. That is, for each $\varepsilon>0$, there exists $\eta(\varepsilon)>0$, such that $\left\|[\bx(0) \hspace{2mm}\widetilde{\btheta}(0)]\right\|<\eta(\varepsilon)$ implies that $\left\|[\bx(t) \hspace{2mm}\widetilde{\btheta}(t)]\right\|<\varepsilon$ for all $t\ge0$. Moreover, $V(\bx(t),\widetilde{\btheta}(t))\in\mathbb{L}_\infty,\hspace{3mm}\forall t\ge0$, along the closed-loop trajectory.
Next, it will be shown that there exists $\eta>0$ such that $(\bx(t),\widetilde{\btheta})\rightarrow(\textbf{0},\textbf{0})$ as $t\rightarrow\infty$ for all $\left\|[\bx(0) \hspace{2mm}\widetilde{\btheta}(0)]\right\|<\eta$. This implies that the equilibrium $(\bx,\widetilde{\btheta})=(\textbf{0},\textbf{0})$ is asymptotically stable, in accordance with Definition~\ref{def2}.

Starting out with the inequality in \eqref{eq:temp7}, it follows by induction that
\begin{align}
\int_{0}^{\infty}{\left(\left\|\bx(\tau)\right\|^2_Q+\beta\left\|\widetilde{\btheta}(\tau)\right\|^2\right)d\tau}\le V(\bx(0),\widetilde{\btheta}(0))-V(\bx(\infty),\widetilde{\btheta}(\infty)).
\end{align}
Since $V(\bx(\infty),\widetilde{\btheta}(\infty))\ge0$ and $V(\bx(0),\widetilde{\btheta}(0))\in\mathbb{L}_\infty$, it follows that 
\begin{align}
\int_{0}^{\infty}{\left(\left\|\bx(\tau)\right\|^2_Q+\beta\left\|\widetilde{\btheta}(\tau)\right\|^2\right)d\tau}\in\mathbb{L}_\infty,
\end{align}
which further implies that $\int_{0}^{\infty}{\left\|\bx(\tau)\right\|^2_Qd\tau}\in\mathbb{L}_\infty$ and $\int_{0}^{\infty}{\beta\left\|\widetilde{\btheta}(\tau)\right\|^2d\tau}\in\mathbb{L}_\infty$. Thus, $\bx(t),\widetilde{\btheta}\in\mathcal{L}_2$. Furthermore, $\left\|[\bx(t) \hspace{2mm}\widetilde{\btheta}(t)]\right\|\in\mathbb{L}_\infty$, $\mathcal{U}\text{ compact }$, and $\f(.,.,.)$ continuous implies that $\f(\bx(t),\bu(t),\hat{\btheta}(t))\in\mathbb{L}_\infty$ for all $t\in[0,\hspace{2mm}\infty)$. Thus $\bx(t)$ is uniformly continuous. Also, computing the derivative of $\widetilde{\btheta}$ from \eqref{eq:temp5b} using first principle yields
\begin{align}
\dot{\widetilde{\btheta}}(t)\triangleq \lim_{\delta\rightarrow0}\frac{\widetilde{\btheta}(t+\delta)-\widetilde{\btheta}(t)}{\delta}=k_\theta^{-1}\Psi\widetilde{\btheta}(t)\in\mathbb{L}_\infty.
\end{align}
Thus, $\widetilde{\btheta}(t)$ is also uniformly continuous. Consequently, $\left\|\bx(t)\right\|$ and $\left\|\widetilde{\btheta}(t)\right\|$ are uniformly continuous in $t$ on $[0,\hspace{2mm}\infty)$. Thus, it follows from Barbalat's Lemma (\cite{khalil_book}) that
\begin{align}
\left\|\bx(t)\right\|\rightarrow0,\text{ and }\left\|\widetilde{\btheta}(t)\right\|\rightarrow0,\text{ as }t\rightarrow\infty.
\end{align}
\end{proof}

\section{Pseudospectral Implementation}\label{Pseudospectral}
In this section, the open-loop infinite horizon optimal control problem in \eqref{opt}--\eqref{const1} is transcribed into an NLP using pseudospectral method. First, the details of the collocation are given. Then, the resulting NLP for the optimal control problem is formulated. The effect of the pseudospectral approximation on the stability of the resulting closed-loop system is also examined.

The most commonly used sets of collocation points are Legendre-Gauss (LG), Legendre-Gauss-Radau (LGR), and Legendre-Gauss-Lobatto (LGL) points. They are obtained from the roots of a Legendre polynomial and/or linear combinations of Legendre polynomial and its derivatives. All three sets of points are defined on the domain $[-1,\hspace{2mm}1]$, but differ significantly in that the LG points include neither of the endpoints, the LGR points include one of the end points, and the LGL points include both of the endpoints.

The LGR collocation scheme is used for the purpose of this paper. The reason for this is because using the pseudospectral form of the LGR scheme results in a system of equations that has no loss of information from the integral form (this is due to the special form of the resulting differentiation matrix)\cite{pseudospectral5}. For the infinite horizon part of the cost functional in \eqref{obj}, the interval $[-1,\hspace{2mm}1]$ is mapped into $[t,\hspace{2mm}\infty)$ using the change of variable
\begin{align}
\tau = \phi(\nu_1),
\end{align}
where $\phi$ is a differentiable, strictly monotonic function. Three examples of such functions are given, based on the ones given in references \cite{pseudospectral_infinite1,pseudospectral_infinite2}, as

\begin{align}
\phi_a(\nu_1) &= t+\frac{1+\nu_1}{1-\nu_1}\\
\phi_b(\nu_1) &=t+\log\left(\frac{2}{1-\nu_1}\right)\\
\phi_c(\nu_1) &=t+\log\left(\frac{4}{\left(1-\nu_1\right)^2}\right).
\end{align}
For the recorded data (the finite horizon part of the cost functional), the interval $[0,\hspace{2mm}T]$ is mapped into $[-1,\hspace{2mm}1]$ using the affine transformation 
\begin{align}
\tau_H = \frac{T}{2}(\nu_2 +1).
\end{align}
Let $S(\nu_1)\triangleq d\phi/d\nu_1\equiv\phi'(\nu_1)$, then the infinite horizon optimal control problem in \eqref{opt}--\eqref{const1} becomes

\begin{align}\nonumber
\min_{(\bar{\bu}(.),\bar{\bu}(.),\bar{\btheta})}J&(\bx(t),\bar{\bu}(.),\bar{\bx}(.),\bar{\btheta}) \\\nonumber&= \int_{-1}^{+1}{S(\nu_1)\left(\left\|\bar{\bx}(\nu_1)\right\|_Q^2+\left\|\bar{\bu}(\nu_1)\right\|_R^2\right)d\nu_1}\\\label{obj2}&\hspace{1cm} + \frac{\gamma T}{2}\int_{-1}^{+1}{\left\|\dot{\bx}_H(\nu_2)-\frac{T}{2}\f(\bx_H(\nu_2),\bu_H(\nu_2),\bar{\btheta})\right\|^2d\nu_2}
\end{align}
subject to
\begin{align}\label{state2}
&\dot{\bar{\bx}}(\nu_1)=S(\nu_1)\f(\bar{\bx}(\nu_1),\bar{\bu}(\nu_1),\bar{\btheta}),\hspace{3mm}\bar{\bx}(-1)=\bx(t) \refstepcounter{equation}\subeqn\\\label{const2}
&\bar{\bu}(\nu_1)\in\mathcal{U}\subeqn.
\end{align}
Here, $\bar{\bx}(\nu_1)$, $\bar{\bu}(\nu_1)$ and $\bar{\btheta}(\nu_1)$ denote the state, the control and the parameter estimate as a function of the new variable $\nu_1$. The independent variable $\nu_2$ denotes the transformed time variable for the recorded data.

Next, the discrete approximations using LGR pseudospectral scheme is described. Consider the LGR collocation points $-1=\tau_1<\hdots<\tau_N<+1$, and the additional non collocated point $\tau_{N+1}=+1$. The interior of the collocation points are given by the zeros of the derivative of the $N$th-order Legendre polynomial $L_{P_N}(x)$, i.e $\left\{\tau_j\right\}_2^{N-1}\triangleq\left\{x:{L}_{P_N}^{\prime}(x)=0\right\}$ \cite{canuto2006erratum}. The state is then approximated by a polynomial of degree at most $N$ as follows:

\begin{align}\label{eq:state_approx1}
\bx(\nu_1)&\approx\sum_{j=1}^{N+1}{\bX_jL_j(\nu_1)},\\\label{eq:state_approx2}
\bx_H(\nu_2)&\approx\sum_{j=1}^{N+1}{\bX_{H_j}L_j(\nu_2)},\\
L_j(\nu) &= \prod_{\substack{k=1\\k\neq j}}^{N+1}{\frac{\nu-\tau_k}{\tau_j-\tau_k}},\hspace{2mm}j=1,\hdots, N+1,
\end{align}
where $L_j$ is a basis of $N$th-degree Lagrange polynomials. Differentiating the state approximations in \eqref{eq:state_approx1} and \eqref{eq:state_approx2}, and evaluating at the collocation points yields
\begin{align}
\dot{\bx}(\tau_i)&\approx\sum_{j=1}^{N+1}{\bX_j\dot{L}_j(\tau_i)}=\sum_{j=1}^{N+1}D_{ij}{\bX_j}=\bD_iX,\\
\dot{\bx}_H(\tau_i)&\approx\sum_{j=1}^{N+1}{\bX_{H_j}\dot{L}_j(\tau_i)}=\sum_{j=1}^{N+1}D_{ij}{\bX_{H_j}}=\bD_iX_H
\end{align}
where 
\begin{align*}
D_{ij}=\dot{L}_J(\tau_i),X=\begin{bmatrix}\bX_1\\\vdots\\\bX_{N+1}\end{bmatrix}\text{ and }\bX_H=\begin{bmatrix}X_{H_1}\\\vdots\\\bX_{H_{N+1}}\end{bmatrix}.
\end{align*}
The matrix $D\in\mathcal{R}^{N\times(N+1)}$ with entries $D_{ij},\hspace{2mm}(i=1,\hdots,N;j=1,\hdots,N+1)$ is the Radau Pseudospectral Differentiation Matrix, since it transforms the state approximation at the points $\tau_1,\hdots,\tau_{N+1}$ to the derivatives of the state approximation at the LGR points $\tau_1,\hdots,\tau_N$. As result, using this formulation averts the use of any numerical smoothing techniques, otherwise needed to compute the state derivatives for the recorded data. 

It is noted that the matrix $X_H$ is composed of the state approximations of the recorded data at the collocation points only. These are generally unknown, since the recorded data are assumed to be measured at specific points which are generally not the collocation points. As a result, a transformation is needed to express $X_H$ in terms of the measured recorded data $X_H^m\in\mathbb{R}^{N_m\times(N+1)}$, where $N_m$ is the number of measurement points. It is required that $N_m>N$ to ensure that the corresponding measured data maps to a unique set of $X_H$. Let $\nu_2=\sigma_1,\hdots,\sigma_{N_m}$ denote the measurement points for the recorded data, then from \eqref{eq:state_approx2}
\begin{align}
\bx_H(\sigma_k)&\approx\sum_{j=1}^{N+1}{\bX_{H_j}L_j(\sigma_k)},\hspace{2mm}k=1,\hdots,N_m.
\end{align}
Thus,
\begin{align}
X_H^m = M_xX_H,
\end{align}
where the matrix $M_x\in\mathbb{R}^{N_m\times(N+1)}$ has entries $M_{x_{kj}}=L_j(\sigma_k)$. Since $N_m\ge (N+1)$, it follows from the orthogonality of the Legendre polynomials that $\text{rank}(M)=N+1$. As a result
\begin{align}
X_H=M_x^\dagger X_H^m=\left(M_x^TM_x\right)^{-1}M_x^TX_H^m
\end{align}
will yield a unique state approximation data $X_H$ for every unique measured state data $X_H^m$. Similarly, the open-loop control signals at the collocation points are given in terms of the open-loop controls at the measurement points as
\begin{align}
U_H=M_u^\dagger U_H^m=\left(M_u^TM_u\right)^{-1}M_u^TU_H^m,
\end{align}
where the matrix $M_u\in\mathbb{R}^{N_m\times N}$ has entries $M_{u_{kj}}=L_j(\sigma_k)$.

Let $\bU\in\mathcal{R}^{N\times m}$ be a matrix whose $i$th row $\bU_i$ is an approximation to the control $\bu(\tau_i),\hspace{2mm}1\le i\le N$. The discrete approximation to the system dynamics in \eqref{state2} is obtained by evaluating the system dynamics at each collocation point and replacing $\dot{\bx}(\tau_i)$ by its discrete approximation $\bD_iX$. Hence, the discrete approximation to the system dynamics is given by
\begin{align}\label{eq:dis_approx}
\bD_iX=S(\tau_i)\f(\bX_i,\bU_i,\bar{\btheta}),\hspace{2mm}1\le i\le N.
\end{align}
Next, the objective function in \eqref{obj2} is approximated by a Legendre-Gauss quadrature as follows:
\begin{align}
J\approx \sum_{i=1}^N w_i\Biggl(S(\tau_i)\left(\left\|\bX_i\right\|^2_Q+\left\|\bU_i\right\|^2_R\right)\hspace{5cm}\notag\\+\frac{\gamma T}{2}\left\|\bD_iM_x^\dagger X_H^m-\frac{T}{2}\f(\be_i M_x^\dagger X_H^m,\be_i M_u^\dagger U_H^m,\bar{\btheta})\right\|^2\Biggr),
\end{align}
where $\be_i$ is the $i$th row of the identity matrix of appropriate dimension and $w_i$ is the quadrature weight, associated with $\tau_i$, given by \cite{hildebrand1987introduction}
\begin{align}
w_i &= \large\left\{\begin{array}{ll}\frac{1-\tau_i}{\left(NP_{N-1}(\tau_i)\right)^2}&\tau_i\neq-1\\\frac{2}{N^2}&\tau_i=-1\end{array}\right.,
\end{align}
where $P_{N-1}$ is the $(N-1)$th Legendre polynomial. The continuous-time nonlinear infinite-horizon optimal control problem in \eqref{opt}--\eqref{const1} is then approximated by the following NLP:

\begin{align}\nonumber
\min_{(U,X,\bar{\btheta})}\bar{J}&(\bx(t),U,X,\bar{\btheta})\\\nonumber&=\sum_{i=1}^N w_i\Biggl(S(\tau_i)\left(\left\|\bX_i\right\|^2_Q+\left\|\bU_i\right\|^2_R\right)\\\label{eq:NLP_opt}&\hspace{1.75cm}+\frac{\gamma T}{2}\left\|\bD_iM_x^\dagger X_H^m-\frac{T}{2}\f(\be_i M_x^\dagger X_H^m,\be_i M_u^\dagger U_H^m,\bar{\btheta})\right\|^2\Biggr),
\end{align}
\begin{align}\nonumber
\text{subject to}&\\
&\bD_iX-S(\tau_i)\f(\bX_i,\bU_i,\bar{\btheta})=0,\hspace{2mm}1\le i\le N,\refstepcounter{equation}\subeqn\\\
&\bx(t)-\bX_1=\textbf{0},\subeqn\\
&\bU_i\in\mathcal{U}, \hspace{2mm}1\le i\le N\subeqn\\\label{eq:NLP_const}
&\bar{\btheta}\in\bTheta\subeqn.
\end{align}

Let $\bU^*_i,\hspace{2mm}1\le i\le N$ and $\bar{\btheta}^*$ be the solution of the NLP in \eqref{eq:NLP_opt}--\eqref{eq:NLP_const}, then the closed-loop control and parameter update laws in \eqref{eq:clp_u} and \eqref{eq:clp_theta} becomes
\begin{align}\label{eq:clp_u2}
\bu(\tau)&=\sum_{j=1}^N{\bU^*_jL_j(\phi^{-1}(\tau))},\\\label{eq:clp_theta2}
\hat{\btheta}(\tau) &= \bar{\btheta}^*+\frac{(\tau-t)T}{2k_\theta}\sum_{j=1}^N{w_j\Gamma(\tau_j)^T\left(\bD_iM_x^\dagger X_H^m-\frac{T}{2}\f(\be_i M_x^\dagger X_H^m,\be_i M_u^\dagger U_H^m,\bar{\btheta}^*)\right)} .
\end{align}

Also, the PE condition requirement of Lemma~\ref{lemma1} reduces to the rank condition
\begin{align}
\text{rank}\left(\sum_{j=1}^Nw_j\f_\theta(\be_i M_x^\dagger X_H^m,\be_i M_u^\dagger U_H^m,\btheta_H)\f_\theta(\be_i M_x^\dagger X_H^m,\be_i M_u^\dagger U_H^m,\btheta_H)^T\right)=p,
\end{align}
for all $\btheta_H\in\bTheta$. This is consistent with the original work in \cite{Concurrent_Learning1} for the special case with LP assumption.

\subsection{Stability Considerations}
Next, the effect of the pseudospectral approximation on the stability of the system is examined. First, some existing established results on the properties of pseudospectral approximations are provided. From these results, the stability of the closed loop system resulting from the control law in \eqref{eq:clp_u2} is studied. Similar to Section~\ref{Main}, except otherwise required for clarity, the shorthands
\begin{align*}
J(\bx(t))&\triangleq J(\bx(t),\bar{\bu},\bar{\bx},\bar{\btheta})\\
J^*(\bx(t))&\triangleq J(\bx(t),\bar{\bu}^*,\bar{\bx}^*,\bar{\btheta}^*)\\
\bar{J}(\bx(t))&\triangleq\bar{J}(\bx(t),U,X,\bar{\btheta})\\
\bar{J}^*(\bx(t))&\triangleq\bar{J}(\bx(t),U^*,X^*,\bar{\btheta}^*)
\end{align*}
are used. 

\begin{lemma}[Interpolation Error Bounds \cite{canuto2006erratum}, Section 5.4.3]\label{lemma2}
If $\bx\triangleq [x_1,\hdots,x_n]\in \mathcal{H}_n^\alpha$, with $x_i\in \mathcal{H}^\alpha, i=1,\hdots,n$, then there exist $\bX_j = \bx(\tau_j), j = 1,\hdots,N+1$, and $c_1,c_{1_i},c_2,c_{2_i}>0$ such that:
\begin{enumerate}[(a)]
\item{The interpolation error is bounded,
	\begin{align}\nonumber
		\left\|\bx(\tau)-\sum_{j=1}^{N+1}{\bX_jL_j(\tau)}\right\|_2&\le\sum_{i=1}^{n}{\left\|x_i-\sum_{j=1}^{N+1}{X_{ij}L_j(\tau)}\right\|_2}\le\sum_{i=1}^{n}{c_{1_i}N^{-\alpha}\left\|x_i\right\|_{(\alpha)}}\\
		&\le c_1N^{-\alpha}.
	\end{align}
}
\item{The error between the exact derivative and the derivative of the interpolation is bounded,
	\begin{align}\nonumber
		\left\|\dot{\bx}(\tau)-\bD(\tau)X\right\|_2&\le\sum_{i=1}^{n}{\left\|\dot{x}_i-\sum_{j=1}^{N+1}{X_{ij}\dot{L}_j(\tau)}\right\|_2}\le\sum_{i=1}^{n}{c_{2_i}N^{1-\alpha}\left\|x_i\right\|_{(\alpha)}}\\
		&\le c_2N^{1-\alpha},
	\end{align}
	where, $\bD(\tau)=[\dot{L}_1(\tau),\dot{L}_2(\tau),\hdots,\dot{L}_{N+1}(\tau)]$.
}
\end{enumerate}
\end{lemma}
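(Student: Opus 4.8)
The plan is to deduce the vector-valued estimate from $n$ scalar estimates, each of which is the classical Legendre--Gauss--Radau interpolation bound, and then to reassemble via the triangle inequality. For the leftmost inequality in (a), observe that at every fixed $\tau$ the expression inside the $\mathcal{L}_2$ norm on the left is the Euclidean norm of the vector whose $i$th entry is $x_i(\tau)-\sum_{j=1}^{N+1}X_{ij}L_j(\tau)$; since $\|v\|\le\sum_{i=1}^n|v_i|$ for every $v\in\mathbb{R}^n$, this is bounded pointwise by $\sum_{i=1}^n\bigl|x_i(\tau)-\sum_j X_{ij}L_j(\tau)\bigr|$, and taking the $\mathcal{L}_2$ norm in $\tau$ and using the triangle inequality for $\mathcal{L}_2$ (Minkowski) gives $\sum_{i=1}^n\|x_i-\sum_j X_{ij}L_j\|_2$. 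The same reasoning applied to $\dot{\bx}(\tau)-\bD(\tau)X$, whose $i$th entry is $\dot x_i(\tau)-\sum_j X_{ij}\dot L_j(\tau)$, yields the leftmost inequality in (b).

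Next I would invoke the standard spectral-approximation estimates for the scalar interpolation operator $\mathcal{I}_N$ associated with the $N$ LGR nodes together with the non-collocated endpoint $\tau_{N+1}=+1$, i.e.\ the operator sending $x_i$ to $\sum_{j=1}^{N+1}X_{ij}L_j$ with $X_{ij}=x_i(\tau_j)$. For $x_i\in\mathcal{H}^\alpha$ these are precisely the bounds of \cite{canuto2006erratum}, Section~5.4.3: $\|x_i-\mathcal{I}_Nx_i\|_2\le c_{1_i}N^{-\alpha}\|x_i\|_{(\alpha)}$ and $\|\dot x_i-(\mathcal{I}_Nx_i)^{\prime}\|_2\le c_{2_i}N^{1-\alpha}\|x_i\|_{(\alpha)}$, where $(\mathcal{I}_Nx_i)^{\prime}=\sum_j X_{ij}\dot L_j$. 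The argument there compares $\mathcal{I}_Nx_i$ with the $\mathcal{L}_2$-orthogonal projection onto polynomials of degree at most $N$—whose error decays like $N^{-\alpha}\|x_i\|_{(\alpha)}$ because of the decay of the Legendre coefficients of $x_i$—controls the aliasing error, and, for the derivative estimate, applies the Markov/inverse inequality for polynomials, which is exactly what produces the loss of one power of $N$, i.e.\ the $N^{1-\alpha}$ rate rather than $N^{-\alpha}$.

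Finally I would absorb the Sobolev norms into the constants: since $\bx\in\mathcal{H}_n^\alpha$, each $\|x_i\|_{(\alpha)}$ is finite, so setting $c_1\triangleq\sum_{i=1}^n c_{1_i}\|x_i\|_{(\alpha)}$ and $c_2\triangleq\sum_{i=1}^n c_{2_i}\|x_i\|_{(\alpha)}$ (finite, positive, and independent of $N$) turns $\sum_i c_{1_i}N^{-\alpha}\|x_i\|_{(\alpha)}$ into $c_1 N^{-\alpha}$ and $\sum_i c_{2_i}N^{1-\alpha}\|x_i\|_{(\alpha)}$ into $c_2 N^{1-\alpha}$, which are the rightmost inequalities. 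The only substantive obstacle is the middle (scalar) estimate, and it is entirely classical spectral-approximation theory; the delicate points there—the comparison between interpolation and $\mathcal{L}_2$-projection, and the one-power-of-$N$ degradation in the derivative bound from the inverse inequality—are already handled in \cite{canuto2006erratum}, so the proof here reduces to this assembly: the triangle inequality at the two ends and the cited scalar bounds in the middle.
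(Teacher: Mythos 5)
The paper states this lemma without proof, importing it directly from \cite{canuto2006erratum}; your argument is consistent with that treatment, since the only content you add beyond the citation is the elementary vector-to-scalar reduction (pointwise domination of the Euclidean norm by the $1$-norm followed by Minkowski's inequality) and the absorption of the finite Sobolev norms $\left\|x_i\right\|_{(\alpha)}$ into the constants $c_1,c_2$, both of which are correct. The substantive scalar estimates, including the loss of one power of $N$ in the derivative bound, are exactly the cited results, so your proof is sound and takes essentially the same route as the paper.
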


\begin{remark}
It is straightforward to see, using the orthogonality property of the Lagrange interpolation polynomial, that the interpolation error is zero at the collocation points. In other words, the approximation is exact at the interpolation points. As a result, any feasible point of the optimization problem in \eqref{obj2}--\eqref{const2} represents the actual system dynamics at the collocation points and the error due to interpolation elsewhere is governed by Lemma~\ref{lemma2}.
\end{remark}
\vspace{1\baselineskip}

\begin{lemma}[Feasibility, Convergence, and Consistency of pseudospectral approximations \cite{justin2011convergence}]\label{lemma3}
Let $\bar{\bx}^*(\tau)\in\mathcal{H}_n^\alpha,\bar{\bu}^*(\tau)\in\mathcal{H}_m^\alpha\text{ and }\bar{\btheta}^*$ be the solution of the optimal control problem in \eqref{obj2}-\eqref{const2}, and $X^*\text{ and } U^*$, the solution of the corresponding NLP in \eqref{eq:NLP_opt}--\eqref{eq:NLP_const}, then the error in the optimal cost functional due to the pseudospectral approximation can be upper bounded as follows;
\begin{align}
\left|J(\bx(t),\bar{\bu}^*,\bar{\bx}^*,\bar{\btheta}^*)-\bar{J}(\bx(t),U^*,X^*,\bar{\btheta}^*)\right|\le \mu(t)N^{-\alpha},
\end{align}
where $\mu(t)>0$ is bounded with bounded derivatives.
\end{lemma}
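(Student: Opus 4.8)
The plan is to sandwich the error between the optimal continuous cost $J^*(\bx(t))$ and the optimal discrete cost $\bar{J}^*(\bx(t))$ by introducing two ``bridge'' configurations: the discrete cost $\bar{J}$ evaluated at the nodal samples $\{\bar{\bx}^*(\tau_i)\},\{\bar{\bu}^*(\tau_i)\}$ of the continuous optimizer, and the continuous cost $J$ evaluated at the Lagrange interpolant $\sum_j \bX^*_j L_j$, $\sum_j \bU^*_j L_j$ of the NLP optimizer. I would then establish the two one-sided estimates $\bar{J}^* \le J^* + \mu_1(t)N^{-\alpha}$ and $J^* \le \bar{J}^* + \mu_2(t)N^{-\alpha}$ and combine them by the triangle inequality. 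Throughout, the cost in \eqref{obj2} is treated as the sum of the (fixed) recorded-data term and the infinite-horizon tracking term, and each contribution is estimated separately; the recorded-data term is handled purely by interpolation and quadrature error bounds since $X_H^m$ is frozen.

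For the first estimate, I would feed the nodal samples of $(\bar{\bx}^*,\bar{\bu}^*)$ into the NLP. By Lemma~\ref{lemma2}(b) the derivative of their interpolant matches $\dot{\bar{\bx}}^*$ at the LGR nodes up to a residual controlled by $N^{1-\alpha}\|\bar{\bx}^*\|_{(\alpha)}$, so the collocation equations \eqref{eq:dis_approx} hold up to that residual; using the ``no loss of information'' property of the Radau differentiation matrix this residual is equivalent to a quadrature-sized defect in the integral form of the dynamics, which can be absorbed by an $O(N^{-\alpha})$ perturbation of the decision variables, making the samples a nearly feasible point of \eqref{eq:NLP_opt}--\eqref{eq:NLP_const}. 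Optimality of $\bar{J}^*$ then gives $\bar{J}^* \le \bar{J}(\bx(t),\{\bar{\bx}^*(\tau_i)\},\{\bar{\bu}^*(\tau_i)\},\bar{\btheta}^*) + O(N^{-\alpha})$, and the right-hand side differs from $J^*$ only by a Gauss--Radau quadrature error of the transformed integrand in \eqref{obj2} plus the interpolation error of Lemma~\ref{lemma2}(a). Since that integrand inherits $\mathcal{H}^\alpha$ regularity from the hypothesis $\bar{\bx}^*\in\mathcal{H}_n^\alpha$, $\bar{\bu}^*\in\mathcal{H}_m^\alpha$ (after the change of variable $\tau=\phi(\nu_1)$), the spectral quadrature estimate bounds it by $c\,N^{-\alpha}$ times a constant built from $\|\bar{\bx}^*\|_{(\alpha)}$, $\|\bar{\bu}^*\|_{(\alpha)}$, the twice-differentiable data of $\f$, the matrices $Q$, $R$, the weight $\gamma$, and $\sup|S|$ over the node set.

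For the reverse estimate I would interpolate the NLP solution; by the remark following Lemma~\ref{lemma2} this interpolant satisfies \eqref{eq:dis_approx} exactly at the collocation points, and Lemma~\ref{lemma2} together with a Gronwall argument (using the $C^2$ regularity of $\f$ from (A1)) yields a genuine solution of \eqref{state2} that is $O(N^{-\alpha})$-close to the interpolant in $\mathcal{L}_2$ and satisfies the same initial and control constraints. Optimality of $J^*$ and the same quadrature estimate then give $J^* \le \bar{J}^* + \mu_2(t)N^{-\alpha}$. Setting $\mu(t)=\max\{\mu_1(t),\mu_2(t)\}$ proves the bound. Boundedness of $\mu(t)$ and of its derivative follows because $t$ enters only through $\bx(t)$ (bounded along the closed-loop trajectory by the stability analysis of Section~\ref{Main}), through the map $\phi$ and its derivative $S$ (where $t$ appears as a smooth translation in $\phi_a,\phi_b,\phi_c$), and through the regularity constants of the parametrized family of optimizers, all of which depend differentiably on $t$ with bounded derivatives under (A1)--(A3).

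The main obstacle is the consistency/feasibility reconstruction in both directions: the interpolant of an NLP solution need not solve the continuous ODE exactly, and the nodal samples of the continuous optimizer need not solve the collocation equations exactly, so closing these gaps rigorously is where the cited pseudospectral convergence theory does its work, and it is precisely here that the special structure of the LGR scheme---the invertibility and information-preserving property of the Radau differentiation matrix---is indispensable. A secondary technical point is that the infinite-horizon weight $S(\nu_1)$ is unbounded as $\nu_1\to+1$ (for instance $S=2/(1-\nu_1)^2$ for $\phi_a$), so one must verify that the transformed integrand is still in $\mathcal{H}^\alpha$ before invoking the spectral quadrature estimate; this is exactly what the hypothesis $\bar{\bx}^*,\bar{\bu}^*\in\mathcal{H}^\alpha$ provides, and it is the reason the bound carries the $t$-dependent factor $\mu(t)$ rather than a universal constant.
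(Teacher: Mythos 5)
You should first note that the paper itself offers no proof of Lemma~\ref{lemma3}: it is imported wholesale from \cite{justin2011convergence}, so there is no in-paper argument to compare against. Your sandwich strategy --- two bridge configurations, two one-sided estimates, triangle inequality --- is indeed the standard architecture of the pseudospectral convergence proofs in that literature, so the overall route is the right one. However, two steps in your sketch are genuine gaps rather than omitted routine details.

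First, the feasibility step in the forward direction does not close as written. You assert that the $O(N^{1-\alpha})$ collocation defect of the sampled continuous optimizer ``can be absorbed by an $O(N^{-\alpha})$ perturbation of the decision variables, making the samples a nearly feasible point.'' But the NLP in \eqref{eq:NLP_opt}--\eqref{eq:NLP_const} imposes the collocation constraints as exact equalities, and optimality of $\bar{J}^*$ only compares against \emph{feasible} points. ``Nearly feasible'' buys you nothing unless you either (i) exhibit an actual feasible point within $O(N^{-\alpha})$ of the samples (which requires an implicit-function or perturbation argument on the collocation system, with a uniform-in-$N$ bound on the inverse of its Jacobian --- nontrivial and not supplied), or (ii) work with a relaxed NLP in which the dynamics constraints are imposed only up to a tolerance $\delta_N$, which is precisely what the cited convergence theory does and which changes the statement of the discrete problem. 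Relatedly, Lemma~\ref{lemma2}(b) controls the derivative mismatch only at order $N^{1-\alpha}$, so the defect you are absorbing, and the state error propagated through Gronwall in the reverse direction, is a priori $O(N^{1-\alpha})$, one power of $N$ worse than the $N^{-\alpha}$ you claim in the final bound; recovering the stated rate needs an extra argument (or extra regularity) that the sketch does not provide.

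Second, your treatment of the singular weight is not sufficient. For $\phi_a$ one has $S(\nu_1)=2/(1-\nu_1)^2$, which is unbounded on $[-1,1)$, and membership of $\bar{\bx}^*,\bar{\bu}^*$ in $\mathcal{H}^\alpha$ on $[-1,1]$ does not control the product $S(\nu_1)\left(\left\|\bar{\bx}\right\|_Q^2+\left\|\bar{\bu}\right\|_R^2\right)$ in any Sobolev norm --- indeed it does not even guarantee the integral is finite without a decay assumption on the state and control as $\nu_1\to 1$. You correctly identify this as the delicate point, but then claim the hypothesis ``provides'' the needed regularity, which it does not; a separate decay or weighted-Sobolev assumption (implicit in the infinite-horizon well-posedness) must be invoked before the spectral quadrature estimate applies. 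The claimed boundedness of $\mu(t)$ and $\dot{\mu}(t)$ also leans on differentiable dependence of the optimizers on $t$, which is asserted rather than argued.
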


\begin{theorem}
Suppose that the assumptions (A1)--(A3) are satisfied, also that the sufficient condition and the hypothesis of Lemma~\ref{lemma1} is satisfied, and that the open-loop optimal control problem in \eqref{opt}--\eqref{const1} is feasible for all $t>0$, then the closed-loop system in \eqref{eq:clp}, in the absence of disturbance, with the model predictive control in \eqref{eq:clp_u2} and the concurrent learning based update law in \eqref{eq:clp_theta2} determined from the solution of the NLP in \eqref{eq:NLP_opt}--\eqref{eq:NLP_const}, is uniformly ultimately bounded. Moreover, the ultimate bound can be made arbitrarily small by the choice of the number of collocation points.
\end{theorem}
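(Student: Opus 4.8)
The plan is to mirror the proof of the asymptotic-stability theorem of Section~\ref{Main}, treating the pseudospectral transcription as a vanishing perturbation of order $N^{-\alpha}$. Concretely, I would proceed in three stages: (i) upgrade the monotone-decrease estimate of Lemma~\ref{lemma1} to the \emph{NLP} optimal value function $\bar{J}^*(\bx(t))$, paying an additive term proportional to $N^{-\alpha}$, using Lemma~\ref{lemma3} together with the interpolation bounds of Lemma~\ref{lemma2}; (ii) reuse the composite function $V(\bx,\widetilde{\btheta}) = \bar{J}^*(\bx(t)) + \int_0^t\beta\left\|\widetilde{\btheta}(\tau)\right\|^2 d\tau$ and show it satisfies a dissipation inequality of the form ``decrease over one sampling period $\le$ a negative-definite term $+\,cN^{-\alpha}$''; and (iii) invoke the standard uniform-ultimate-boundedness argument (e.g.\ \cite{khalil_book}), whose ultimate bound is a class-$\mathcal{K}$ function of the perturbation size and therefore shrinks to zero as $N\to\infty$.

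For stage (i), fix a sampling instant $t$ and $s\in(t,\,t+T_s]$. Let $(\bar{\bu}^*,\bar{\bx}^*,\bar{\btheta}^*)$ denote the continuous-time optimizer of \eqref{opt}--\eqref{const1} and $(U^*,X^*,\bar{\btheta}^*)$ the optimizer of the NLP \eqref{eq:NLP_opt}--\eqref{eq:NLP_const}. Lemma~\ref{lemma3} gives $|J^*(\bx(t))-\bar{J}^*(\bx(t))|\le \mu(t)N^{-\alpha}$. The control actually applied, $\bu(\tau)=\sum_j \bU^*_j L_j(\phi^{-1}(\tau))$, coincides with the NLP control at the collocation points and, by Lemma~\ref{lemma2} and the remark following it, differs from $\bar{\bu}^*$ elsewhere by $O(N^{-\alpha})$; since $\f$ is Lipschitz on compacta (A1) and $\mathcal{U}$ is compact (A2), a Gr\"onwall estimate yields $\left\|\bx(s)-\bar{\bx}^*(s,\bx(t))\right\|\le c_1 N^{-\alpha}$ uniformly on $[t,\,t+T_s]$, and the pseudospectral update \eqref{eq:clp_theta2} deviates from the continuous update \eqref{eq:clp_theta} by $O(N^{-\alpha})$ likewise. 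Chaining the continuity of the optimal cost in its initial state (Lemma~A.1 of \cite{chen1997stability}) with the decrease already proved in Lemma~\ref{lemma1}, and absorbing the $O(N^{-\alpha})$ gaps between the continuous and pseudospectral integrands, gives
\begin{align*}
\bar{J}^*(\bx(s)) \le \bar{J}^*(\bx(t)) - \int_t^s\left(\left\|\bx(\tau)\right\|_Q^2+\left\|\bu^*(\tau)\right\|_R^2+2\beta\left\|\widetilde{\btheta}(\tau)\right\|^2\right)d\tau + c\,N^{-\alpha},
\end{align*}
with $c>0$ uniform over the operating region.

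For stage (ii), define $V$ exactly as in the asymptotic-stability proof; it is positive definite, continuous at the origin, and locally sandwiched between class-$\mathcal{K}_\infty$ bounds of $\left\|[\bx\ \widetilde{\btheta}]\right\|$ (the lower bound from $\bar{\bx}^*(t,\bx(t))=\bx(t)$, $Q>0$ and $\epsilon\ge 0$; the upper bound from continuity of the value function and boundedness of $\mu$). Summing the estimate of stage (i) over the sampling grid, for each $k$,
\begin{align*}
V(\bx((k+1)T_s),\widetilde{\btheta}((k+1)T_s)) - V(\bx(kT_s),\widetilde{\btheta}(kT_s)) \\ \le -\int_{kT_s}^{(k+1)T_s}\left(\left\|\bx(\tau)\right\|_Q^2+\beta\left\|\widetilde{\btheta}(\tau)\right\|^2\right)d\tau + c\,N^{-\alpha}.
\end{align*}
Hence $V$ strictly decreases at the sampling instants whenever $\left\|[\bx(kT_s)\ \widetilde{\btheta}(kT_s)]\right\|$ exceeds a threshold of order $N^{-\alpha/2}$ (using uniform continuity of the closed-loop trajectory on each interval to pass from the integral to its endpoint value). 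Stage (iii) is then the textbook UUB argument: the sublevel set of $V$ through the initial condition is forward invariant for $N$ large, on it the trajectory enters in finite time a residual set whose size is a class-$\mathcal{K}$ function of $N^{-\alpha/2}$, and this bound is made arbitrarily small by increasing the number of collocation points.

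The main obstacle I anticipate is making the perturbation constants (the $\mu$ of Lemma~\ref{lemma3} and the Gr\"onwall and continuity constants above) genuinely \emph{uniform} rather than trajectory-dependent; this needs an a priori compact invariant region, which I would obtain by a bootstrapping argument (for $N$ large enough the per-step error $cN^{-\alpha}$ is dominated by the guaranteed decrease on any sublevel set of $V$ containing $\bx_0$, so that set is invariant and all constants are finite on it). A secondary technicality is that Lemmas~\ref{lemma2}--\ref{lemma3} require $\bar{\bx}^*\in\mathcal{H}_n^\alpha$ and $\bar{\bu}^*\in\mathcal{H}_m^\alpha$; this regularity must be assumed or inferred from the smoothness of $\f$ in (A1) and the structure of the constraints.
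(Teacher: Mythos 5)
Your proposal is correct and follows essentially the same route as the paper: Lemma~\ref{lemma3} transfers the decrease property of Lemma~\ref{lemma1} to $\bar{J}^*$ at the cost of an additive $O(N^{-\alpha})$ term, the same composite function $V(\bx,\widetilde{\btheta})=\bar{J}^*(\bx(t))+\int_0^t\beta\|\widetilde{\btheta}(\tau)\|^2d\tau$ is used, and a standard uniform-ultimate-boundedness argument with residual size vanishing as $N\to\infty$ closes the proof. The only differences are minor: the paper passes to the differential inequality $\dot V\le-\|\bx\|_Q^2-\beta\|\widetilde{\btheta}\|^2+cN^{-\alpha}$ rather than staying at the sampled-time level, and your explicit handling of the gap between the applied interpolated NLP control and the continuous-time optimizer (via Gr\"onwall) and of the uniformity of the perturbation constants is more careful than the paper, which glosses over both points.
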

\begin{proof}
It has been shown that the feasibility of the open-loop optimal control problem in \eqref{opt}--\eqref{const1} implies the feasibility of the NLP in \eqref{eq:NLP_opt}--\eqref{eq:NLP_const} (See \cite{justin2011convergence}). Using Lemma~\ref{lemma3}, the relationship between the value function of the finite-horizon optimal control problem in \eqref{obj2}-\eqref{const2} and the optimal value of the finite-dimensional NLP in \eqref{eq:NLP_opt}--\eqref{eq:NLP_const} can be expressed as
\begin{align}
\bar{J}^*(\bx(s))=J^*(\bx(s))+\mu_1(s) N^{-\alpha},\hspace{2mm}\mu_1(s),\dot{\mu_1}(s)\in\mathbb{L}_\infty
\end{align}
for all $s\in(t,\hspace{2mm}t+T_s]$. Thus, using Lemma~\ref{lemma1}, it follows that
\begin{align}\nonumber
\bar{J}^*(\bx(s))&\le J^*(\bx(t))- \int_t^s{\left(\left\|\bx(\tau)\right\|^2_Q+\left\|\bu^*(\tau)\right\|^2_R+2\beta\left\|\widetilde{\btheta}(\tau)\right\|^2\right)d\tau} +\mu_1(s) N^{-\alpha}\\\nonumber
&= \bar{J}^*(\bx(t))- \int_t^s{\left(\left\|\bx(\tau)\right\|^2_Q+\left\|\bu^*(\tau)\right\|^2_R+2\beta\left\|\widetilde{\btheta}(\tau)\right\|^2\right)d\tau} \\&\hspace{3.25cm}+(\mu_1(s)+\mu_2(s)) N^{-\alpha},\hspace{2mm}\mu_2(s),\dot{\mu_2}(s)\in\mathbb{L}_\infty,
\end{align}
or
\begin{align}\nonumber
\bar{J}^*(\bx(s))&\le\bar{J}^*(\bx(t))- \int_t^s{\left(\left\|\bx(\tau)\right\|^2_Q+\left\|\bu^*(\tau)\right\|^2_R+2\beta\left\|\widetilde{\btheta}(\tau)\right\|^2\right)d\tau} \\\label{eq:temp8}&\hspace{5.5cm}+\mu(s) N^{-\alpha},\hspace{2mm}\mu(s),\dot{\mu}(s)\in\mathbb{L}_\infty.
\end{align}
Similarly, to the proof of Theorem~\ref{Thm1}, define the function
\begin{align}
V(\bx(t),\widetilde{\btheta}(t)) = \bar{J}^*(\bx(t)) + \int_0^t{\beta\left\|\widetilde{\btheta}(\tau)\right\|^2}d\tau.
\end{align}
Taking the time derivative of $V(\bx(t),\widetilde{\btheta}(t))$ yields
\begin{align}
\dot{V}(\bx(t),\widetilde{\btheta}(t)) &= \lim_{s\rightarrow t}\left(\frac{V(\bx(s),\widetilde{\btheta}(s))-V(\bx(t),\widetilde{\btheta}(t))}{s-t}\right)\\
&=\lim_{s\rightarrow t}\left(\frac{\bar{J}^*(\bx(s))-\bar{J}^*(\bx(t))}{s-t} + \frac{1}{s-t}\int_t^s{\beta\left\|\widetilde{\btheta}(\tau)\right\|^2}d\tau\right),
\end{align}
which, after using \eqref{eq:temp8}, can be upper bounded as
\begin{align}\nonumber
\dot{V}(\bx(t),\widetilde{\btheta}(t))&\le-\lim_{s\rightarrow t}\frac{1}{s-t}\int_t^s{\left(\left\|\bx(\tau)\right\|^2_Q+\left\|\bu^*(\tau)\right\|^2_R+\beta\left\|\widetilde{\btheta}(\tau)\right\|^2\right)d\tau}\\\nonumber&\hspace{6cm}+\dot{\mu}(t) N^{-\alpha}\\
&\le-\lim_{s\rightarrow t}\frac{1}{s-t}\int_t^s{\left(\left\|\bx(\tau)\right\|^2_Q+\beta\left\|\widetilde{\btheta}(\tau)\right\|^2\right)d\tau}+\dot{\mu}(t) N^{-\alpha},
\end{align}
which simplifies\footnote{If $f(t)$ is integrable, then there exists a function $F(t)$ such that $F^\prime(t) = f(t)$. Thus $\lim_{s\rightarrow t}\frac{1}{s-t}\int_t^s{f(\tau)d\tau}=\lim_{s\rightarrow t}\frac{F(s)-F(t)}{s-t}=F^\prime(t)=f(t)$} to 
\begin{align}
\dot{V}(\bx(t),\widetilde{\btheta}(t))&\le-\left\|\bx(t)\right\|^2_Q-\beta\left\|\widetilde{\btheta}(t)\right\|^2+\dot{\mu}(t) N^{-\alpha}\\\label{eq:temp9}
&\le-\left\|\bx(t)\right\|^2_Q-\beta\left\|\widetilde{\btheta}(t)\right\|^2+c N^{-\alpha}
\end{align}
for some $c>0$, since $\dot{\mu}$ is bounded. Thus the state and parameter estimation error are uniformly ultimately bounded \cite{khalil_book}. From \eqref{eq:temp9}, it is clear that the ultimate bound can be made arbitrarily small by choosing $N$ appropriately.
\end{proof}

\section{Numerical Example}\label{Numerical}
The following numerical examples are given to demonstrate the proposed control method.
\subsection{Example 1}
Consider a system described by the following ODEs:
\begin{align}
\begin{array}{rl}
\dot{x}_1 &= \left(\theta_1 + |\theta_2x_1|\right)x_2 + u,\\
\dot{x}_2 &= \theta_2x_1.
\end{array}
\end{align}
Here,
\begin{align}
\f_\theta = \left[\begin{array}{cc}x_2&0\\\text{sgn}(\theta_2x_1)x_1x_2&x_1\end{array}\right],
\end{align}
and $\Gamma(\tau)$ is chosen as
\begin{align}
\Gamma(\tau) = \left[\begin{array}{cc}x_2(\tau)&0\\0&x_1(\tau)\end{array}\right].
\end{align}
Thus, the condition in \eqref{eq:iniq} is satisfied with 
\begin{align}
\begin{array}{rl}
\lambda_1&=\min\left\{\int_0^T{x_{H_1}(\tau_H)^2d\tau_H},\int_0^T{x_{H_2}(\tau_H)^2d\tau_H}\right\}\\\\
\lambda_2&=\max\left\{\int_0^T{x_{H_1}(\tau_H)^2d\tau_H},\int_0^T{x_{H_2}(\tau_H)^2d\tau_H}\right\}.
\end{array}
\end{align}
The recorded data is generated using the open loop control
\begin{align}
u(t) = 0.1\sin(5t) + 0.05\cos(2t),
\end{align}
which results in the values of $\lambda_1 = 0.0021$ and $\lambda_2 = 0.0155$. The measurement sampling time is set to $T_s = 0.4 s$. As a result, the optimization routine runs for 0.4s until the next measurement is available. Meanwhile, within the interval $\tau\in[t,\hspace{2mm}t+T_s]$, the control algorithm runs in an open loop fashion based on \eqref{eq:clp_u2} and \eqref{eq:clp_theta2}, using the present state estimate and predictions. The inverse learning rate is set to $k_\theta = 5T_s\lambda_3/\lambda_4 = 0.0309$. The number of LGR nodes used is 5, and the size of the recorded data used is $N_m = 50$. 
\begin{figure}[h!]
  \centering
    \includegraphics[width=9cm,height=5cm]{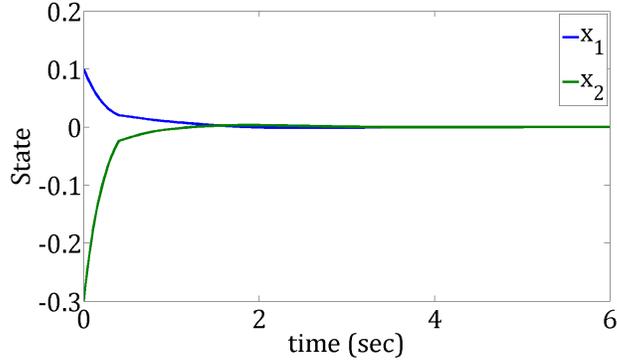} 
 \caption{State trajectory, $T_s = 0.4s$}
\label{fig_state_ex1}
\end{figure}

\begin{figure}[h!]
  \centering
    \includegraphics[width=9cm,height=5cm]{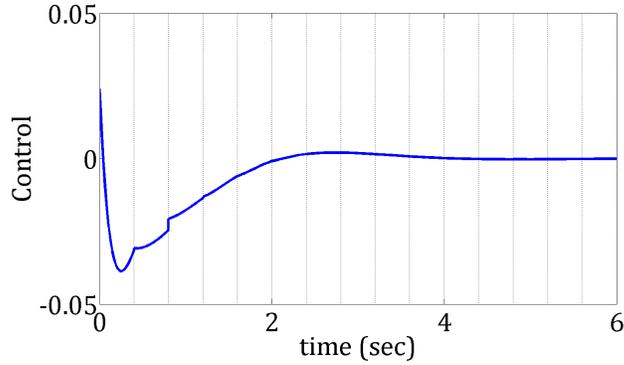}  
\caption{Control trajectory, $T_s = 0.4s$}
\label{fig_control_ex1}
\end{figure}

\begin{figure}[h!] 
  \centering
    \includegraphics[width=9cm,height=5cm]{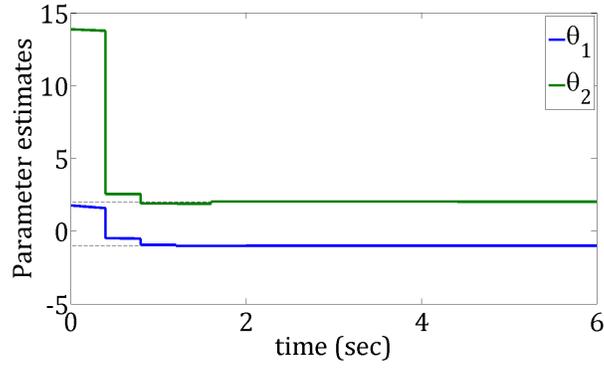}
 \caption{Parameter estimate trajectory, $T_s = 0.4s$}
\label{fig_param_ex1}
\end{figure}

Figure~\ref{fig_state_ex1} shows that the resulting state trajectory converges to the origin asymptotically. The control authority is shown in Figure~\ref{fig_control_ex1}. The faint vertical lines show the measurement points and how the control is updated at those points. Figure~\ref{fig_param_ex1} shows that the parameter estimates converge to the true parameters. 

\begin{figure}[h!]
  \centering
    \includegraphics[width=9.75cm,height=5cm]{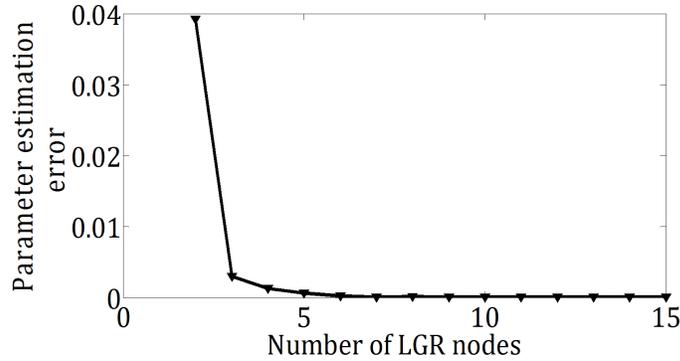}
 \caption{Effect of the number of LGR nodes on parameter estimation}
\label{fig_NLGR}
\end{figure}

As shown in Figure~\ref{fig_NLGR}, the more the number of LGR nodes, the better the ``goodness" of the parameter estimation. This is because a better approximation of the system dynamics is obtained by increasing the number of LGR nodes. As a result, the system parameter are better approximated.

\begin{figure}[h!]
  \centering
    \includegraphics[width=9cm,height=5cm]{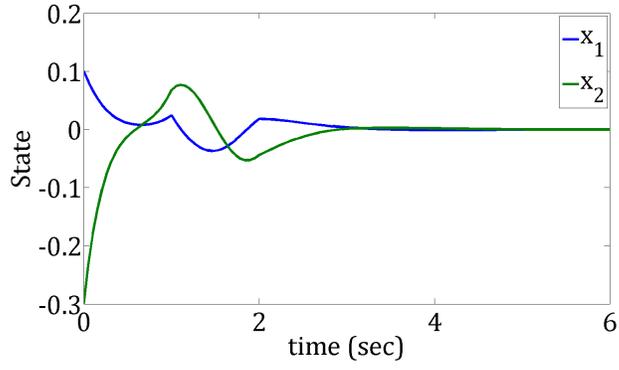} 
 \caption{State trajectory, $T_s = 1s$}
\label{fig_state_ex11}
\end{figure}

\begin{figure}[h!]
  \centering
    \includegraphics[width=9cm,height=5cm]{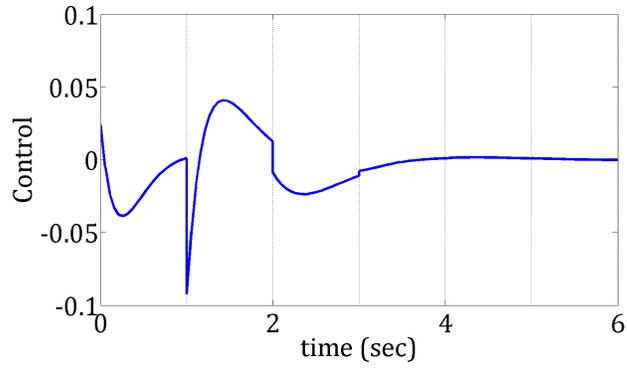}  
\caption{Control trajectory, $T_s = 1s$}
\label{fig_control_ex11}
\end{figure}

\begin{figure}[h!] 
  \centering
    \includegraphics[width=9cm,height=5cm]{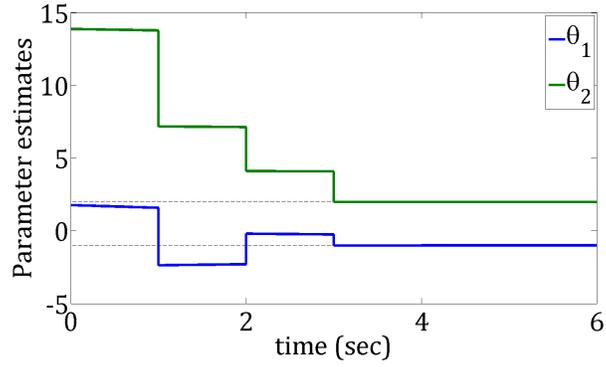}
 \caption{Parameter estimate trajectory, $T_s = 1s$}
\label{fig_param_ex11}
\end{figure}

In order to demonstrate the effect of $T_s$ on the control system, another simulation is carried out with $T_s = 1s$. Figure~\ref{fig_state_ex11} through Figure~\ref{fig_param_ex11} show the resulting state, control and parameter estimate trajectories. It is seen that the parameter estimate, and consequently the control and system response, converges more slowly with increase sampling time.

\subsection{Example 2}
This example demonstrates the special case of linearly parametrized systems. The system considered is a mass-spring-damper system whose dynamics is given by
\begin{align}
\frac{d}{dt}\left[\begin{array}{c}x_1\\x_2\end{array}\right]^T = \left[\begin{array}{c}x_2\\-\frac{k}{m}x_1-\frac{b}{m}x_2+\frac{1}{m}u\end{array}\right]^T,
\end{align}
where $m,k,b$ denote the system mass, spring constant, and damping coefficient values respectively. The dynamics is linearly parametrized as follows
\begin{align}
\frac{d}{dt}\left[\begin{array}{cc}x_1 & x_2\end{array}\right] = \left[\begin{array}{cccc}\theta_1&\theta_2&\theta_3&\theta_4\end{array}\right]\left[\begin{array}{cc}x_2&0\\0&-x_1\\0&-x_2\\0&u\end{array}\right],
\end{align}
where the unknown parameters are given by $\theta_1=1, \theta_2=k/m, \theta_3 = b/m$, $\theta_4 = 1/m$, where $m=2kg, k = 5Nm, b = 0.8Ns/m$. Two simulations were carried out; one in which the control is unconstrained, and the other in which the constraint $|u|\le 0.5$ is imposed on the control authority. Figures~\ref{fig_state_ex2} through~\ref{fig_param_ex2} show the states trajectory, control authority and the parameter updates. As expected, it is seen that the settling time for the constrained case is longer than the unconstrained case. Note that, in this example, the number of unknown parameters is more than the number of states. 

\begin{figure}[h!]
  \centering
   \subfloat[][Unconstrained]{ \includegraphics[width=7cm,height=4cm]{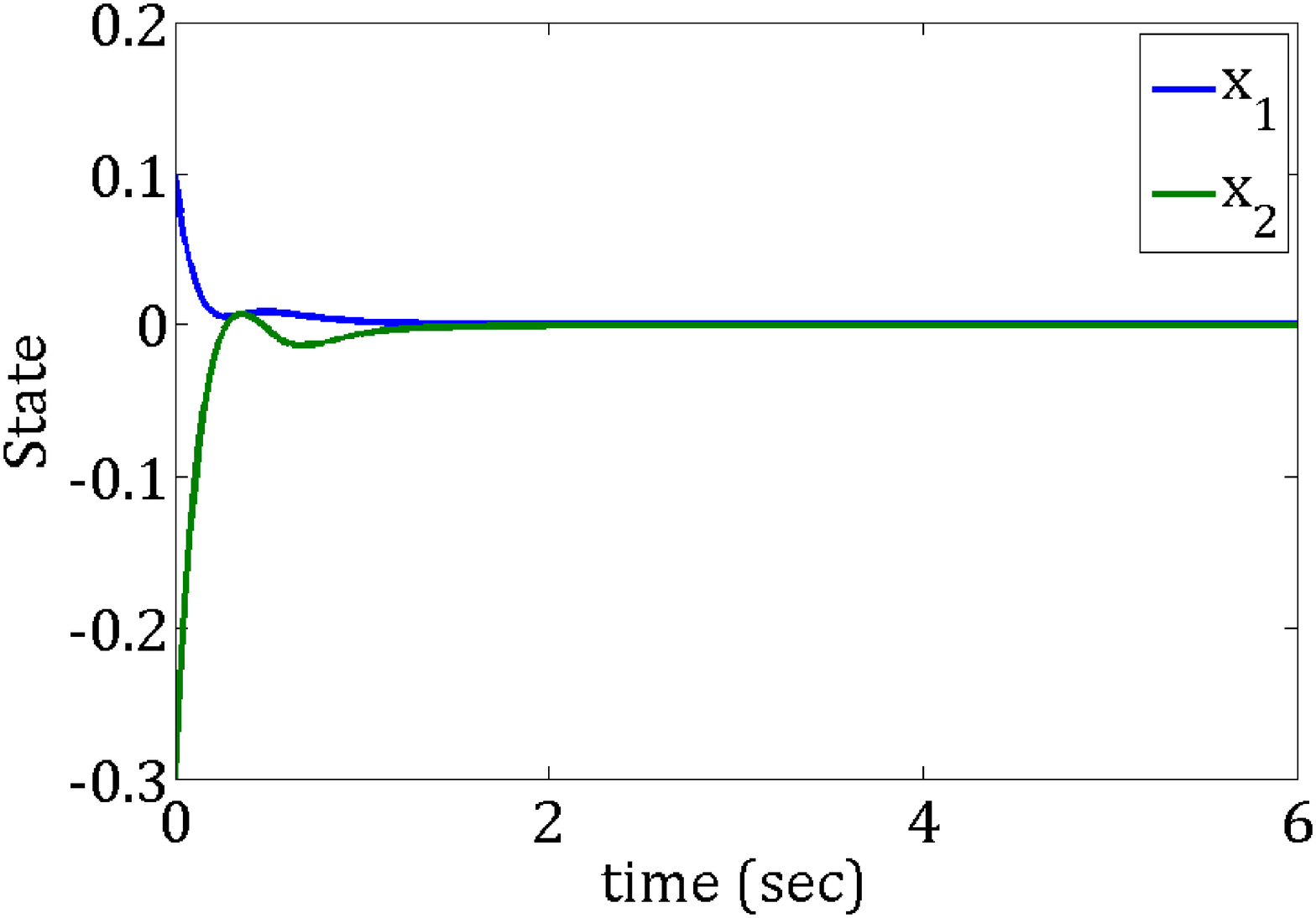}}
   \subfloat[][Constrained]{ \includegraphics[width=7cm,height=4cm]{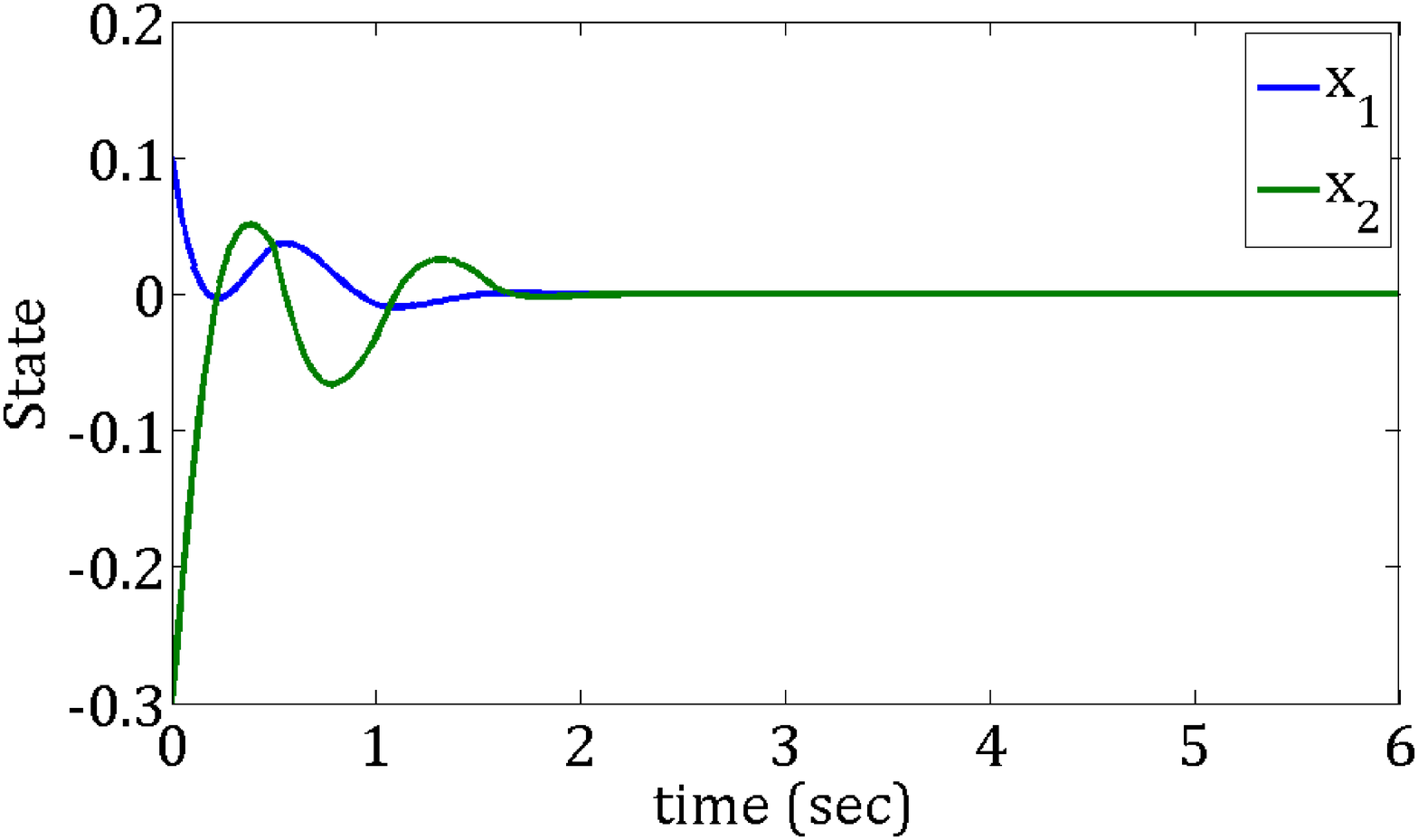}}
 \caption{State trajectory}
\label{fig_state_ex2}
\end{figure}

\begin{figure}[h!]
  \centering
    \subfloat[][Unconstrained]{\includegraphics[width=7cm,height=4cm]{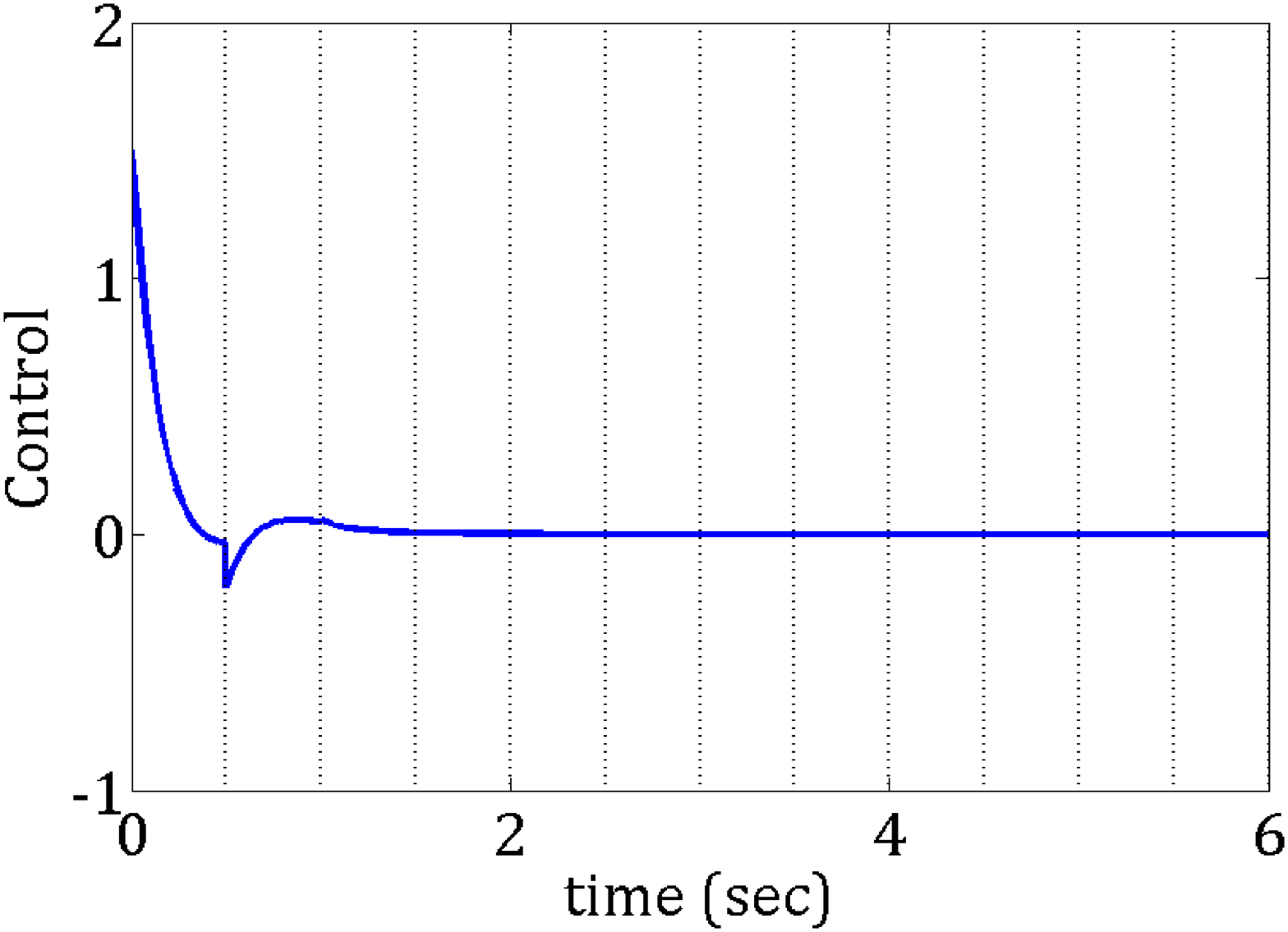} }
    \subfloat[][Constrained]{\includegraphics[width=7cm,height=4cm]{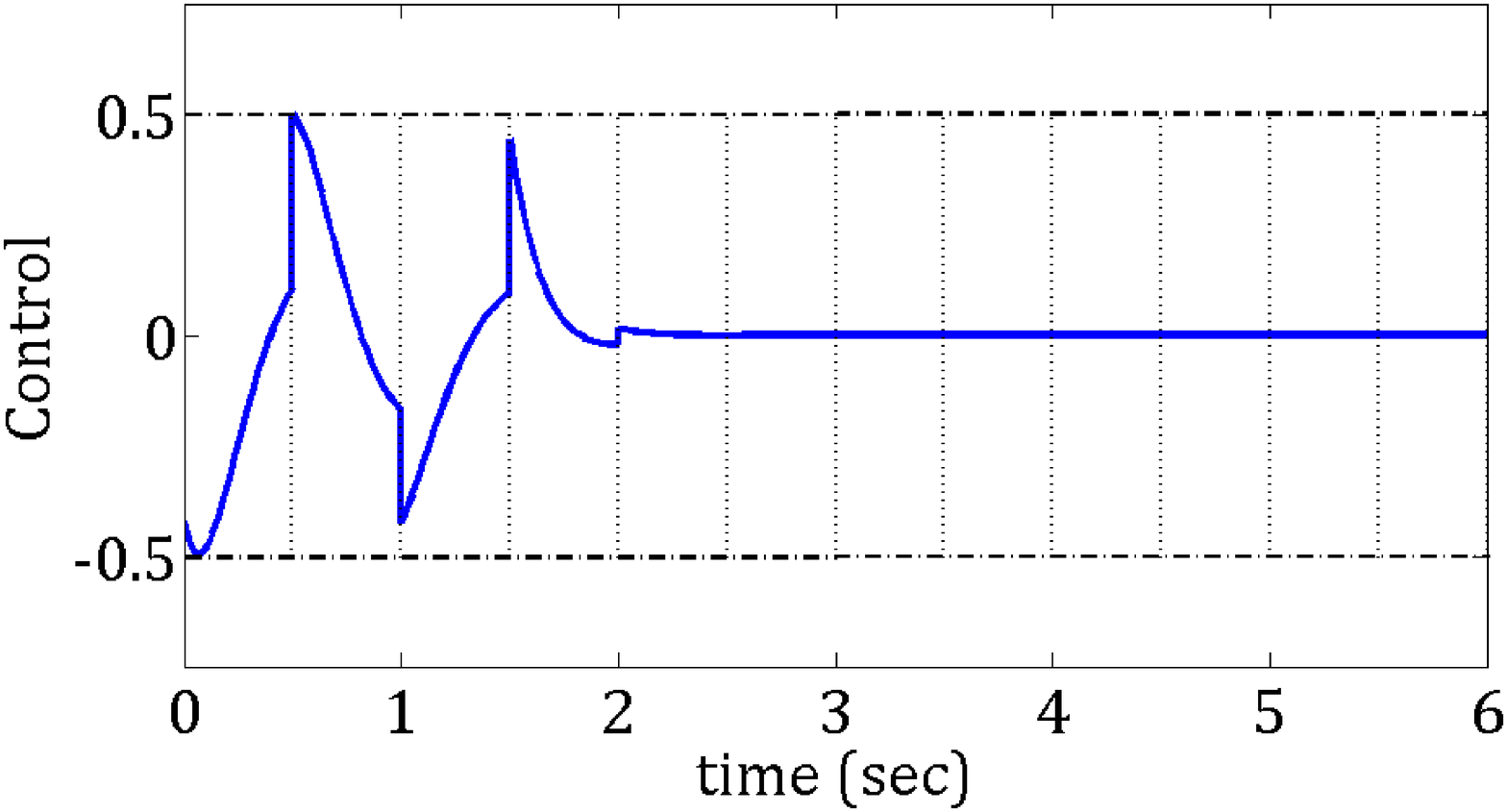} }
\caption{Control trajectory}
\label{fig_control_ex2}
\end{figure}

\begin{figure}[h!] 
  \centering
    \subfloat[][Unconstrained]{\includegraphics[width=7cm,height=4cm]{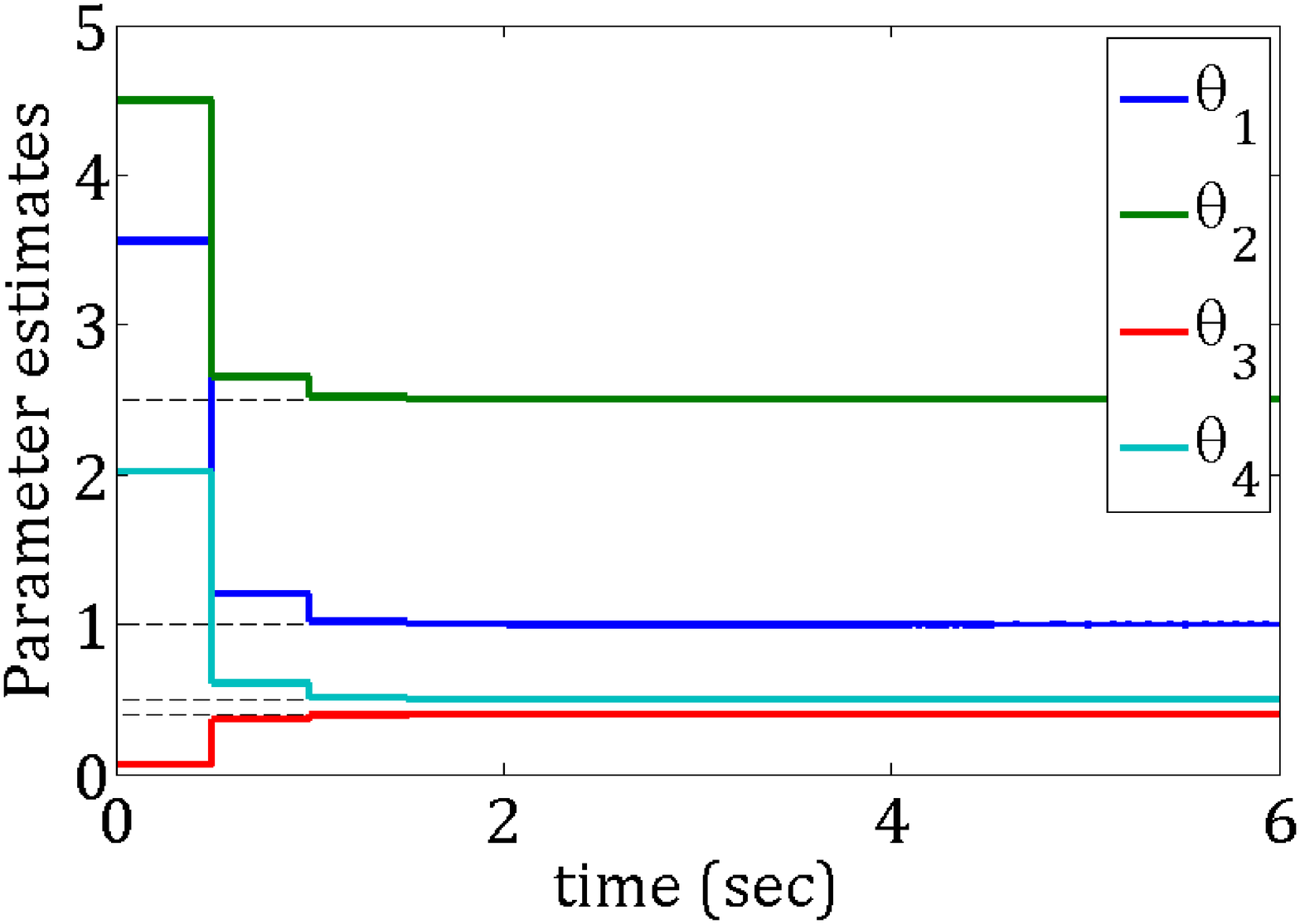}}
    \subfloat[][Constrained]{\includegraphics[width=7cm,height=4cm]{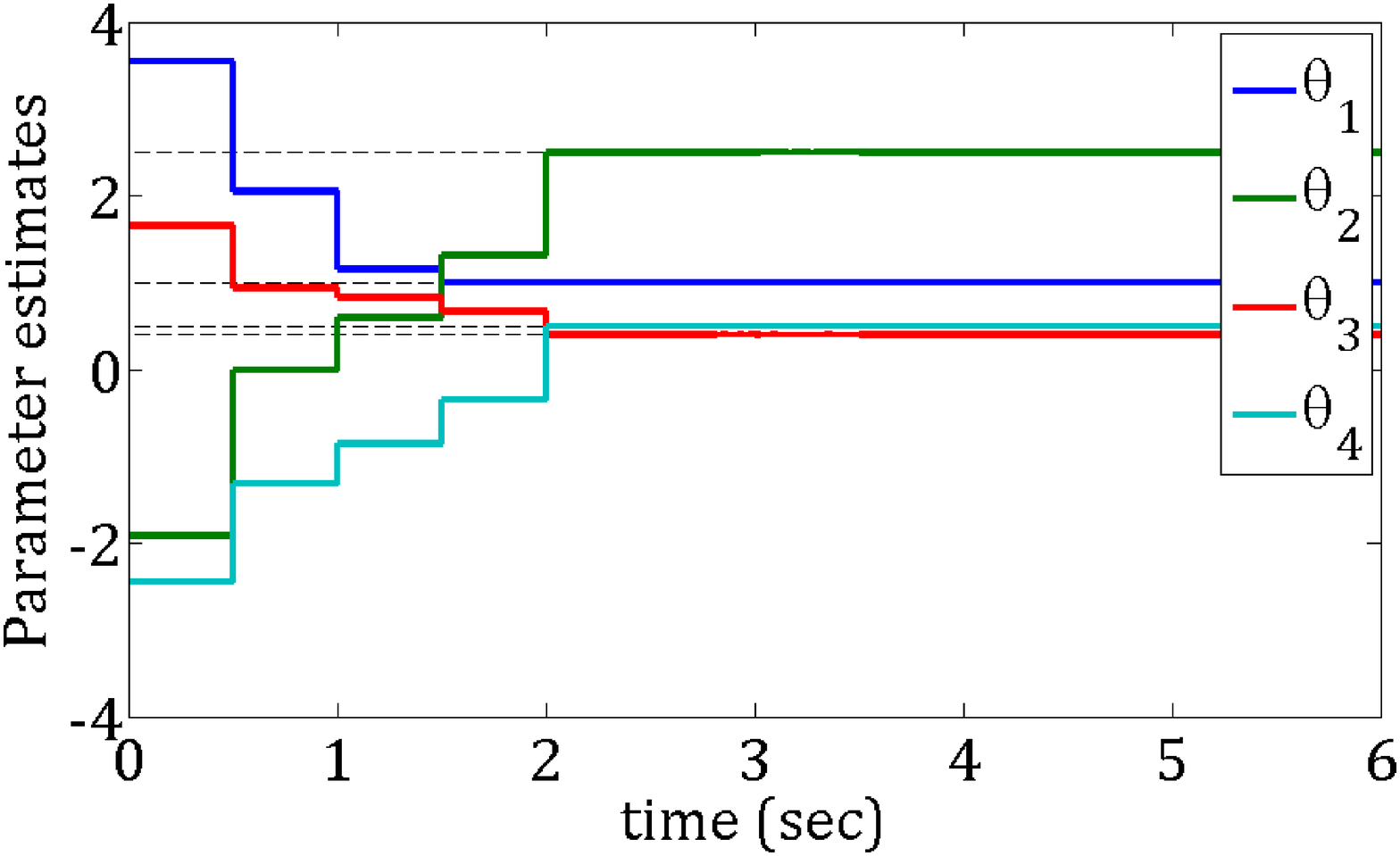}}
 \caption{Parameter estimate trajectory}
\label{fig_param_ex2}
\end{figure}

\section{Conclusion}\label{Conclusion}
A direct adaptive control technique is presented for use, in conjunction with concurrent learning approach, within the framework of model predictive control. The presented control technique undermines the need to switch between online learning phase and control phase by modulating the control sequences and the parameter estimates simultaneously at each computation instant. Theoretical analysis shows that the concurrent learning based adaptive model predictive control system is asymptotically stable with asymptotic parameter convergence. Numerical simulation results validated the theoretical claims and also showed that parameter estimation error decreases with increasing number of LGR nodes. However, associated with increased number of LGR points is increased computational burden. Therefore, a trade off is necessary between computational burden and parameter estimation error.

In future, the effect of actuator dynamics will be considered. Also, other discretization methods will be considered. Candidate discretization methods are; the use of Laguerre functions, other collocation methods like Runge-Kutta, etc.
\section{Acknowledgment}
All thanks be to my Lord and Personal Savior Jesus Christ.

\begin{small}
\bibliographystyle{abbrv}
\bibliography{references}
\end{small}
\end{document}